\newtheorem{theorem}{Theorem}
\newtheorem{corollary}[theorem]{Corollary}
\newtheorem{definition}{Definition}
\newtheorem{lemma}[theorem]{Lemma}
\newtheorem{proposition}[theorem]{Proposition}
\newtheorem{remark}{Remark}
\newcommand{\NN}{{\rm\bf N}}
\newcommand{\ZZ}{{\rm\bf Z}}
\newcommand{\RR}{{\rm\bf R}}
\newcommand{\EU}{{\rm\bf S}}
\newcommand{{\markov}}{T}
\begin{document}
\title[Attractors in complex networks]{Attractors in complex networks}

\keywords{Generalized Lotka Volterra model, Heteroclinic networks, Real eigenvalues, Attracting subnetwork, Essential asymptotic stability.}
\subjclass[2010]{Primary: 34D23; Secondary: 34C37, 34D05, 34D20, 34D45.}

\author[Alexandre A. P. Rodrigues]{Alexandre A. P. Rodrigues \\ Centro de Matem\'atica da Universidade do Porto \\ 
and Faculdade de Ci\^encias da Universidade do Porto\\
Rua do Campo Alegre 687, 4169--007 Porto, Portugal}
\address[A. A. P. Rodrigues]{Centro de Matem\'atica da Universidade do Porto\\
and Faculdade de Ci\^encias da Universidade do Porto\\ \\
Rua do Campo Alegre 687, 4169--007 Porto, Portugal}
\email[A.A.P.Rodrigues]{alexandre.rodrigues@fc.up.pt}

\date{\today }

\begin{abstract}
In the framework of the generalized Lotka Volterra model, solutions representing multispecies sequencial competition can be predictable with high probability. In this paper, we show that it occurs because the corresponding ``heteroclinic channel'' forms part of an attractor. We prove that, generically, in an attracting heteroclinic network involving a finite number of hyperbolic  and non-resonant saddle-equilibria whose linearization has only real eigenvalues, the connections corresponding to the most positive expanding eigenvalues form a part of an attractor (observable in numerical simulations).
  \end{abstract}
\maketitle

\textbf{Heteroclinic networks are much studied because they organise dynamics in which saddle-type equilibria are visible. Heteroclinic networks for differential equations can be accompanied by complicated nearby dynamics. Apart from topological descriptions of this complicated dynamics, a relevant question is the fate of typical, say for a full Lebesgue measure of initial conditions. This question has not always attracted appropriate attention. 
Asymptotic stability is well understood for a large class of heteroclinic
cycles in the symmetric context. However, in many cases this is not satisfactory, because a Lyapunov unstable heteroclinic cycle may
attract a set of large measure of initial states and may be observable in numerical simulations. }

\textbf{In the context of the generalized Lotka-Volterra system, the authors of \cite{ATHR, MTHAR} studied heteroclinic networks whose linearization at the equilibria has only real eigenvalues. They claim that there is a preference of the system to evolve in the strongest direction with and without the influence of noise. Rigorous results about the $\omega$-limit sets for these networks are going to be needed in the very near future to discuss the stability of networks that appear in the context of \emph{winnerless competition} and \emph{synchronization into clusters} that appear in several biological systems. }

\textbf{In the present paper, motivated by partial results in \cite{AC} and supported by numerical simulations of \cite{ATHR, MTHAR},
we show that, within a heteroclinic network involving hyperbolic equilibria whose linearization has only real eigenvalues, generically the connections corresponding to the most positive expanding eigenvalues form a part of an attractor. }

\section{Introduction}
Heteroclinic cycles are flow-invariant sets consisting of finitely many
equilibria and trajectories connecting them in a cyclic fashion.  They appear in several applications and are useful to the study of intermittent dynamics: a trajectory near a heteroclinic cycle will spend a long time in a neighbourhood of an equilibrium, before rapidly switching along a connection towards another equilibrium, where it stays again for a long period of time. Such behaviour is displayed, for example, by the geomagnetic field: its fast, unpredictable reversals of polarity are followed by long periods in a stationary mode. Many authors claim the presence of heteroclinic networks in the equations that describe geodynamic processes -- see \cite{MPR, Rodrigues2} and references therein.

Another example occurs in population dynamics and has been treated in \cite{ATHR, HS}:
as a model for competition between three or more species, Lotka-Volterra equations can 
possess attracting heteroclinic
networks between equilibria in which there is only one winner. It seems for some time as if a unique species wins the competition and all others become
extinct, before its density suddenly drops and its dominant state is taken by another
species.

With dos Reis \cite{dosReis} and Guckenheimer
and Holmes \cite{GH},  it has been discovered that heteroclinic cycles and networks may be structurally stable in an equivariant context.
Subsequently, their study gained importance. Placing the
connections in flow-invariant subspaces, where they are of saddle-sink type, and restricting
to perturbations that respect these symmetries, heteroclinic cycles becomes persistent invariant sets: if
the invariant subspaces are not destroyed under small smooth perturbations, the cycle persists. In \cite{HomS, krupa}, the authors give a
comprehensive overview of early results on robust heteroclinic cycles.

The discovery of robustness ignited an interest on heteroclinic cycles in the nineties. 
Necessary and sufficient conditions 
for asymptotic stability of different types of cycles were derived in \cite{KM1, KM2}. However, being more complex in structure than a single hyperbolic equilibrium, 
heteroclinic cycles exhibit more complex stability features than the Lyapunov dichotomy
\emph{asymptotic stability vs instability},
in the sense that everything except for a set of
zero Lebesgue measure might remain or leave a small neighbourhood of the cycle.

In \cite{Melbourne}, Melbourne presents a paradigmatic example of a cycle that is
a non-asymptotically stable attractor: it attracts a set of positive Lebesgue measure of initial conditions, which is not a full neighbourhood of the cycle. The phenomenon becomes more surprising when several
cycles are joined together to form a heteroclinic network. A cycle belonging to a network has at least one positive transverse eigenvalue and therefore cannot be asymptotically stable.  
However, in many cases, a dominant cycle may be ``more stable'' than the others
in the sense that it may be the $\omega$-limit set of a large set (in terms of measure) of initial conditions in a neighbourhood of
the network, even though the other cycles may attract infinitely many solutions. 
Brannath \cite{Brannath} provides simple examples of these networks on simplices, where a single cycle attracts a large set of initial conditions.
\medbreak

\medbreak
In  \cite{ATHR, MTHAR}, the authors predict the behaviour of complex multi-agent systems, based on the \emph{Winnerless Competition Principle} that induces robust dynamics in complex heteroclinic networks. See also \cite{Hou}.  Motivated by \cite{KS} and \cite{ATHR, MTHAR}, we may ask:
\begin{enumerate}
\item What are the limit sets of the network? According to \cite{Milnor}, are they likely limit sets? 
\item In the case of a stable heteroclinic network which is a union of the one-dimensional connections there are only a finite number of possible $\omega$-limit sets? 
\end{enumerate}

For general (non-symmetric) attracting heteroclinic networks, we expect the presence of essentially asymptotically stable subnetworks. More precisely, if $\Gamma$ is an attracting heteroclinic network involving a finite number of equilibria with only real eigenvalues, then the connections corresponding to the most positive expanding eigenvalues of the linearization determine a possibly smaller attractor.

\subsection*{Key ideas of the proof}
The key steps of this paper are a combination of the following ideas:
\begin{itemize}
\item start with an attracting heteroclinic network whose linearization at the hyperbolic equilibria has real and non-resonant eigenvalues;
\item at each equilibrium point, its strong unstable manifold corresponds to a heteroclinic connection to the next equilibrium point;
\item in the same spirit of Deng's Strong Lambda Lemma, we show that, near each equilibrium, the set of initial points that follow the strong unstable manifold  is (locally) the complement of a wedge and thus has asymptotically full Lebesgue measure;
\item the complement of wedges are generically send (under the transition map) into the complement of other wedges with the same property.  This process can be repeated \emph{ad infinitum}.
\end{itemize}
\bigbreak
\subsection*{Framework of the paper}
The goal of this paper is to prove that, within a heteroclinic network whose nodes are hyperbolic equilibria whose linearization has only real eigenvalues, generically the connections corresponding to the most positive expanding eigenvalues form a part of an attractor (possibly not unique).  The main result is stated in Section~\ref{Section_main}, after collecting relevant notions in
Section~\ref{Preliminaries}. We also distinguish between various kinds of stability that have been developed along the last two decades.

Linearization techniques are used in Section~\ref{Linearization} to construct a local transition map around the equilibria and also a return map around the cycle. This section deals with the geometrical structures which allow us to get an understanding of the dynamics.
The proof of the main result is done in Section \ref{Proof of the Main Theorem}.
\medbreak
Throughout this paper, we have endeavoured to make a self
contained exposition bringing together all definitions and topics related to the proofs. We have stated short
lemmas and we have drawn illustrative figures to make the paper easily readable.

\section{Preliminaries}
\label{Preliminaries}
In this section, we present and discuss some definitions which will be useful throughout the article. Let $n,N\in \NN$ where $n \geq 4$, and $G$ is a compact region of $\RR^{n}$. We consider a system of ordinary differential equations
\begin{equation}
\label{general}
\dot{x}=f(x), \qquad x \in G\subset \RR^{n}
\end{equation}
where the vector field $f: G \rightarrow G$ is $C^2$ with flow given by the unique solution  $x(t)\mapsto\varphi(t,x_{0})\in G\subset \RR^{n}$. For $x \in G$, let us introduce the following norm in the set of vector fields:
$$
||f||_{C^1}= \sup_{x \in G} \left(||f||+ \left\|\frac{\partial f}{\partial x}\right\|\right),
$$
where $||.||$ represents the norm of the maximum in $\RR^{n}$.
Endowed with this norm, the set of vector fields becomes a Banach space, denoted by $\mathcal{X}$. A $\delta$-neighbourhood of the vector field ${f}\in\mathcal{X}$ is the set of all vector fields $\tilde{f}\in\mathcal{X}$ satisfying 
$
\left\| \tilde{f}-f \right\|<\delta.
$

\subsection{Heteroclinic terminology}
We start this subsection with a definition of heteroclinic cycle and network that suffices to our purposes. 
\medbreak
Given two equilibria $p_1$ and $p_2$, a  \emph{heteroclinic connection} from $p_1$ to $p_2$, denoted 
$[p_1\rightarrow p_2]$, is a connected flow-invariant  manifold contained in $W^{u}(p_1)\cap W^{s}(p_2)$, where $W^s(p)$ and $W^u(p)$ refer to the stable and unstable manifolds of the hyperbolic equilibrium $p$, respectively.  The dimension of the unstable manifold of an equilibrium $p$ will be called the \emph{Morse index} of $p$.
Throughout this work, we assume that the connections are one-dimensional. 
\medbreak

Let $\mathcal{S=}\{p_{j}:j\in \{1,\ldots,N\}\}$ be a finite ordered set of
equilibria. We say that there is a  {\em heteroclinic cycle }associated with $\mathcal{S}$ if 
$$
\forall j\in \{1,\ldots,N\},W^{u}(p_{j})\cap W^{s}(p_{j+1})\neq
\emptyset \pmod N.
$$
A \emph{heteroclinic network} is a connected union of heteroclinic cycles.  Hereafter, all equilibria (also called by nodes) will be hyperbolic. Depending on the geometry of their eigendirections, the eigenvalues of $df$ at the equilibria may be classified into four types: radial, contracting, expanding and transverse. We adress the reader to \cite{KM1} for this classification.

\subsection{Notions of Stability}
In order to gain a broader understanding of the dynamics associated with heteroclinic networks, it is essential to accurately distinguish between various kinds of non-asymptotic stability. 
 We discuss various forms of stability for compact sets $X\subset G\subset \RR^n$ that have been developed over the last three decades. 

\medbreak
Consider a compact subset $X\subset G$ that is invariant under the flow $\varphi(t, x_0)$, with $t\in \RR$ and $x_0\in G$, generated by the differential equation (\ref{general}). Following definitions in Milnor \cite{Milnor}, let $\mathcal{B}(X)$ be the basin of attraction of $X$ defined as: 
$$
\mathcal{B}(X) =\{x \in \RR^n : \omega(x)\subset X \},
$$
and let $\ell$ denote the $n$-dimensional Lebesgue measure. 
For $\varepsilon>0$, let $B_\varepsilon(X)$ be an open $\varepsilon$-neighbourhood of $X$. 
The $\varepsilon$-local basin of attraction of $X$ is defined as:
$$
\mathcal{B}_\varepsilon(X):=\left\{x \in \ B_\varepsilon(X):\quad \omega(x)\subset X \quad \text{and} \quad \varphi(t,x)\in B_\varepsilon(X), \quad \forall t\in \RR^+ \right\}.
$$

 We say that $X$ is an attractor if it attracts a set of positive measure \emph{i.e.}
if $\ell(\mathcal{B}(X)) > 0$. We now introduce different notions of stability. The following definition, due to Lyapunov \cite{Liapunov}, is well known in the literature. 

\begin{definition}
The set $X\subset G\subset \RR^m$ is called \emph{asymptotically stable} if for any neighbourhood $U$ of $X$ there is a neighbourhood $V$ of $X$ such that for all $x\in V$ we have $\varphi(t,x) \in U$, for all $t>0$ and $\omega(x) \subset X$.
\end{definition}

The well known distinction between \emph{asymptotic stability} and \emph{instability} is  too coarse  to study the stability of  heteroclinic cycles. This is particularly true for cycles that are
part of a bigger network: within a network, no single cycle can be asymptotically stable, as there
is always an invariant saddle with an unstable direction belonging to another cycle. There
might  be a dominant cycle that is observed for a large proportion of initial conditions, making
it more observable in terms of numerics, than the other cycles. 
Melbourne \cite{Melbourne} was
the first to give an explicit example of such an attractor and establish an intermediate type of stability:
\emph{essential asymptotic stability}.

\begin{definition}
The set $X\subset G\subset \RR^n$ is called \emph{essentially asymptotically stable}  if there is a set
$D \subset\RR^n$, so that for any neighbourhood $U$ of $X$ and any $a \in (0,1)$, there is a neighbourhood $V\subset U$ of $X$ such that:
\begin{enumerate}
\item for $x \in V \backslash D$ we have $\varphi(t,x) \in U$ for all $t > 0$, as well as $\omega(x) \subset X$, and 
\item $\frac{\ell(V\backslash D)}{\ell(V)}>a$.
\end{enumerate}
\end{definition}

The expression \emph{essential asymptotic stability} generated some confusion since various authors have used it with slightly different interpretations. There exist contradicting definitions in the literature: while that of \cite{Melbourne} is equivalent to
simply attracting any set of positive measure, in \cite{Brannath} the author defines \emph{essential asymptotic stability} in the same way as \emph{predominant asymptotic stability} defined by \cite{PA}. See also \cite{CL, Lohse} where the contradicting definitions have been detected and discussed. 
In order to precisely distinguish between different levels of instability we recall  the following definitions due to \cite{Brannath}. If $X \subset G\subset \RR^n$, let $\overline{X}$ stand for the topological closure of $X$.

\begin{definition}
The set $X\subset \RR^n$ is called \emph{asymptotically stable relative to the set} $N \subset \RR^n$ if $X \subset \overline{N}$ and for any neighbourhood $U$ of $X$, there is a neighbourhood $V$ such that for all $x\in V \cap N$ we have $\varphi(t,x) \in U$, for all $t>0$ and $\omega(x) \subset X$.
\end{definition}

\begin{definition}
The set $X\subset \RR^m$ is called \emph{predominantly asymptotically stable} if:
\begin{enumerate}
\item it is asymptotically stable relative to some $N \subset \RR^n$ and
\item 
$
\lim_{\varepsilon \rightarrow 0} \frac{\ell(B_\varepsilon(X)\cap N)}{\ell(B_\varepsilon(X))}=1.
$
\end{enumerate}
\end{definition}

Roughly speaking, a predominantly asymptotically stable set is an asymptotically stable set, up to a set with zero asymptotic Lebesgue measure (a wedge for instance).

\section{The main result: hypotheses and dynamical consequences}
\label{Section_main}
\subsection{The hypotheses}
Let $N \in \NN$ and $n\geq 3$. The object of our study is the dynamics around a heteroclinic network connecting equilibria, for which we give a rigorous description here. Specifically, we study a $C^2$--vector field (\ref{general}) on a compact set $G\subset\RR^n$, such that: 
\bigbreak
\begin{enumerate}
\item[\textbf{(H1)}]  its flow has $N$ hyperbolic equilibria, denoted by $p_1,p_2,\ldots,p_N$.
\bigbreak
\item[\textbf{(H2)}]  for each $i\in \{1,\ldots N\}$, the linearization of $f$ at $p_i$ has $n$ non-resonant\footnote{non-resonant in the sense of Hartman \cite{Hartman}.} eigenvalues denoted and ordered in the following way: 
\begin{equation}
\label{order}
\lambda_1^{(i)}>...>\lambda_{u_i}^{(i)}>0>-\lambda_{u_{i+1}}^{(i)}>...>-\lambda_{n}^{(i)} \quad \text{with} \quad \lambda_1^{(i)},\lambda_2^{(i)},..., -\lambda_{n-1}^{(i)}, -\lambda_{n}^{(i)} \in \RR^+.
\end{equation}
\bigbreak
\end{enumerate}
\medbreak
The positive and negative eigenvalues are called by expanding and contracting eigenvalues, respectively. For each $i\in \{1,\ldots N\}$, the eigendirections associated with the contracting eigenvalues at $p_i$ form the contracting tangent space at $p_i$.
\medbreak
The one-dimensional strongly stable manifold of $p_i$ is tangent to the eigendirection related to $-\lambda_{n}^{(i)}$ and, from now on, will be denoted by $W^{ss}(p_i)$. This manifold exists and is $C^2$-smooth -- details in \cite{HPS, Homburg96}. In the neighbourhood of $p_i$, there exists a local  $(n-1)$--dimensional $C^2$-smooth invariant manifold, denoted by $W^{cs}(p_i)$, tangent to the eigenspace associated with $$\lambda_2^{(i)}, \ldots, \lambda_{u_i}^{(i)},-\lambda_{u_i+1}^{(i)},\ldots,-\lambda_{n}^{(i)},$$ and often called by center stable manifold of $p_i$.  It is easy to see that $u_i$ is the Morse index of the saddle $p_i$. 
\bigbreak
For each $i \in \{1,  \ldots, N \}$, define the four positive constants:
\begin{equation}
\label{constants}
\alpha_i=\frac{\lambda_1^{(i)}}{\lambda_2^{(i)}}>1,  \qquad \beta_i=(\alpha_i)^{-1}=\frac{\lambda_2^{(i)}}{\lambda_1^{(i)}}<1, \qquad \mu_i=\frac{\lambda_{u_i+1}^{(i)}}{\lambda_1^{(i)}} \qquad \text{and} \qquad 
\rho_i= \frac{\lambda_1^{(i)}+\lambda_{
u_i+1}^{(i)}}{2\lambda_1^{(i)}}.
\end{equation}

\bigbreak
\begin{enumerate}
\item[\textbf{(H3)}] there exists a heteroclinic network $\Gamma$ in such a way that each equilibrium $p_i$ has a one-dimensional strong unstable manifold,  where the connection from $p_i$ to $p_{i+1}$ lies, with $i \in \{1,  \ldots, N \}$. Locally, each equilibrium has $u_i$ unstable and $s_i:=n-u_i$ stable directions. 
\end{enumerate}
\medbreak
The network $\Gamma$ may be a very intrincate heteroclinic network. Nevertheless, Hypothesis \textbf{(H3)} says that the equilibria are labelled in such a way that the strong unstable connection of $p_i$ joins $p_{i+1}$ where  $i \in \{1,  \ldots, N-1 \}$, which is always a closed path. With respect to the constant $\mu_i$, hereafter called the \emph{saddle-value} of $p_i$, we state our last hypothesis:
\medbreak
\begin{enumerate}
\item[\textbf{(H4)}] For each $i \in \{1, \ldots, N\}$, $\mu_i>1$.
\end{enumerate}
\bigbreak
\begin{remark}
Note that \textbf{(H1)}--\textbf{(H4)}  generalise the hypothesis of \cite[\S 5]{MTHAR}.
\end{remark}

\subsection{The principal cycle}
The heteroclinic cycle $\Gamma^p\subset \Gamma$ formed by the heteroclinic connections corresponding to the most positive expanding eigenvalue at each equilibrium, is called the \emph{principal cycle}. The letter $N^\star \leq N$ denotes the number of equilibria of the principal cycle. 
\medbreak

The set $$\Gamma^\star:=\bigcup _{i=1}^{N^\star} \{p_i\} \bigcup _{i=1}^{{N^\star}-1}[p_i \rightarrow p_{i+1}] \subset \Gamma $$ is called the \emph{principal heteroclinic sequence}, or heteroclinic channel according to \cite{MTHAR}. Note that the $\Gamma^\star\subset \Gamma^p\subset \Gamma$. 
\medbreak
Let $\varepsilon, \delta>0$ be arbitrarily small. 
Let $U_i$ be an open ball centered at $p_i$ and radius $\varepsilon>0$ that does not contain other equilibria but $p_i$. Since $\dim W^{cs}(p_i)= n-1$, near each $p_i$, the manifold $W^{cs}(p_i)$ locally divides $U_i$ into two open connected components, say $U_i^+$ and $U_i^-$.
For all $i \in \{1,\ldots, N-1\}$, we have $$[p_i \rightarrow p_{i+1}] \cap U_{i+1}^+ \neq \emptyset.$$  
Denote by 
$O_\delta([p_i \rightarrow p_{i+1}])$ a $\delta$-neighbourhood of $[p_i \rightarrow p_{i+1}]$ in $G\subset \RR^{n}$. Finally, define also  the following neighbourhood of the principal heteroclinic sequence $$V(\varepsilon, \delta)=\bigcup_{i=1}^{N^\star-1} O_\delta([p_i \rightarrow p_{i+1}])\bigcup_{i=1}^{{N^\star}-1} U_i.$$

\begin{figure}\begin{center}
\includegraphics[height=8cm]{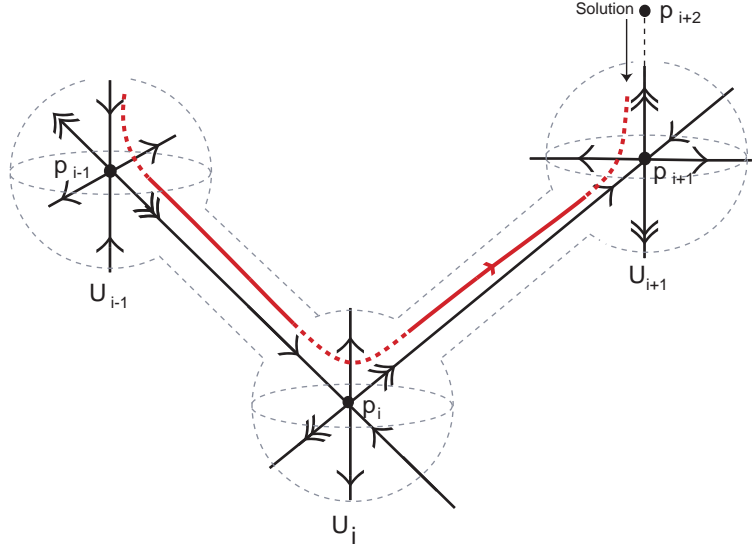}
\end{center}
\caption{\small Representation of a heteroclinic channel and a solution following it. Double arrows correspond to the strong expanding directions }
\label{network1}
\end{figure}

\begin{definition}
\label{channel}
 Let $\varepsilon, \delta>0$ sufficiently small.
We say that system (\ref{general}) has a heteroclinic channel inside $V(\varepsilon, \delta)$ if there exist $T\geq 0$ and an open set $U\subset U_1^+$ of initial conditions such that every solution $\varphi(t,x)$, $x\in U$ satisfy (see Figure \ref{network1}):
\begin{enumerate}
\item $\varphi(0,x)=x$;
\item $\forall t \in [0,T]$, $\varphi(t,x) \in V(\varepsilon, \delta)$;
\item $\forall i \in \{1, \ldots, N^\star\}$, there exists a finite sequence $(t_i)_i$, $t_i<T$, such that $\varphi(t_i,x)\in U_i$.
\end{enumerate}
\end{definition}
\bigbreak
The next result says that the occurrence of a heteroclinic channel inside $V(\varepsilon, \delta)$ is a robust property (provided $\varepsilon>0$ and $\delta>0$ are sufficiently small). 

\begin{theorem}[\cite{ATHR, MTHAR}, adapted]
\label{Main1}
Let $\varepsilon, \delta>0$ sufficiently small. 
If  (\ref{general}) satisfies the Hypotheses \textbf{(H1)}--\textbf{(H4)}, then:
\begin{enumerate}
\item there exists a heteroclinic channel inside $V(\varepsilon, \delta)$ and 
\item there exists a neighbourhood $\mathcal{V}$ of the vector field $f \in \mathcal{B}$, endowed with the $C^1$ norm, and an open set $\mathcal{U}\subset \mathcal{V}$ such that the flow of $\dot{x}=h(x)$ has a heteroclinic channel inside $V(\varepsilon, \delta)$, for every $h \in \mathcal{U}$.
\end{enumerate}
 \end{theorem}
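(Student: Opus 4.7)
The plan is to prove existence and $C^1$-robustness of the heteroclinic channel by tracking an open set of initial conditions through a chain of local transition maps near the equilibria of the principal cycle. The non-resonance assumption (H2) permits local linearization, while the saddle-value condition (H4) provides the contraction needed to keep the set confined near the connections.

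First, by the Sternberg linearization theorem, the non-resonance in (H2) yields a local $C^1$-conjugacy between the flow and its linearization inside each ball $U_i$. Working in linearized coordinates, I set up two cross-sections at $p_i$: an incoming section $\Sigma_i^{\rm in}\subset\partial U_i$ transverse to the connection $[p_{i-1}\to p_i]$, essentially a graph over the local stable directions, and an outgoing section $\Sigma_i^{\rm out}\subset \partial U_i$ transverse to $W^{uu}(p_i)$. The local transition map $\phi_i:\Sigma_i^{\rm in}\setminus W^{cs}_{\rm loc}(p_i)\to\Sigma_i^{\rm out}$ is computable explicitly: its components are power laws in the unstable coordinate with exponents given by the eigenvalue ratios $\lambda_j^{(i)}/\lambda_1^{(i)}$. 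In particular, the leading contracting coordinate picks up the exponent $\mu_i>1$, producing genuine contraction, while $\alpha_i>1$ makes all non-leading expanding coordinates subdominant with respect to the leading one.

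Next I track an open set $U\subset U_1^+$ through the sequence. The core observation, in the spirit of Deng's Strong Lambda Lemma, is that $\phi_i(U\cap \Sigma_i^{\rm in})$ accumulates on $W^{uu}(p_i)\cap \Sigma_i^{\rm out}$ outside a wedge of arbitrarily small relative measure near $W^{cs}(p_i)$. Composing with the global transition along $[p_i\to p_{i+1}]$—a diffeomorphism which, generically, sends wedge complements to wedge complements and preserves the ``correct-side'' intersection with $U_{i+1}^+$—gives the inductive step: an open set entering $U_i$ close to $W^{uu}(p_{i-1})\cap U_i^+$ exits close to $W^{uu}(p_i)$ and enters $U_{i+1}^+$ close to $p_{i+1}$. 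Iterating $N^\star-1$ times produces a nonempty open subset of $U_1^+$ whose orbits visit every $U_i$ in order while remaining in $V(\varepsilon,\delta)$, establishing item (1) together with the intermediate times $t_i$ required in Definition \ref{channel}.

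For item (2), all the ingredients are $C^1$-open. Hyperbolicity, non-resonance, and the strict inequalities in (H2)--(H4) are themselves open conditions; the cross-sections and the transversality of $W^u(p_i)$ with $\Sigma_{i+1}^{\rm in}$ depend continuously on the vector field in the $C^1$ topology. Although the heteroclinic connections need not persist under arbitrary perturbations, the tube of shadowing orbits that forms the channel does, because its construction requires only the local spectral data and a transverse intersection with the incoming section at each step. I expect the main obstacle to lie in the inductive step: after each composition of a local with a global transition, I must quantitatively control both the measure and the open-set character of the surviving portion of the initial set, verifying that no new forbidden wedge is introduced by the global map and that its projection onto $\Sigma_{i+1}^{\rm in}$ lies on the correct side of $W^{cs}(p_{i+1})$. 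This demands a quantitative version of the Strong Lambda Lemma that tracks how the exponents $\alpha_i$, $\beta_i$, $\mu_i$, $\rho_i$ combine multiplicatively along the principal cycle, and that such combination leaves an open (and not merely positive-measure) piece of $U_1^+$ at the end of the chain.
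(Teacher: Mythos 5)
The paper does not actually supply a proof of Theorem~\ref{Main1}: it is imported, ``adapted'', from \cite{ATHR, MTHAR}, and the machinery of Sections~\ref{Linearization} and \ref{Proof of the Main Theorem} is developed only for the stronger Theorem~\ref{Main}. So there is no in-paper proof to compare against; what you have written is, in effect, a specialization of the paper's later argument (linearization under non-resonance, cross sections $\Sigma_i^{in},\Sigma_i^{out}$, power-law local transitions, cuspoidal exceptional sets \`a la Deng) to the weaker existence-and-openness statement. That route is sound, and it buys more than is needed: for item (1) you only require, at each $p_i$, a nonempty open subset of $\Sigma_i^{in}$ whose image under the local map lands within $\varepsilon$ of $W^{uu}_{loc}(p_i)$ and, after the global map, on the correct side $U_{i+1}^+$ (which is guaranteed by the standing assumption $[p_i\to p_{i+1}]\cap U_{i+1}^+\neq\emptyset$). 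In particular the genericity you invoke for the inductive step (``wedge complements to wedge complements'') is not part of the hypotheses of Theorem~\ref{Main1} and is not needed here; it only enters when one wants the measure-theoretic conclusion of Theorem~\ref{Main}. You should delete that appeal or flag it as superfluous, since as written your proof would establish a genericity-conditional statement rather than the unconditional one claimed.

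Two smaller points you should make explicit. First, Definition~\ref{channel} demands a single finite time $T$; since the local transit times $T_i(x^{(i)})$ blow up as $x^{(i)}\to 0$, you must shrink your open set $U$ to one whose closure is compactly contained in the good set (bounded away from each $W^{cs}_{loc}(p_i)$) so that the total transit time is uniformly bounded. Second, for item (2) the clean argument is not that the structural hypotheses \textbf{(H1)}--\textbf{(H4)} are $C^1$-open (that alone does not preserve the channel, since the connections themselves may break), but that once $U$ and $T$ are fixed as above, the property ``every solution starting in $\overline{U}$ remains in the open set $V(\varepsilon,\delta)$ on $[0,T]$ and meets each $U_i$ in order'' is stable under $C^1$-small (indeed $C^0$-small) perturbations by continuous dependence of solutions on the vector field over a compact time interval. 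You gesture at this with the ``tube of shadowing orbits'', which is the right idea; stating it as finite-time continuous dependence closes the argument without any quantitative lambda-lemma bookkeeping.
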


Althought the existence of a heteroclinic channel is a robust property, the channel as in Definition \ref{channel}, might be a transient phenomenon. 

\subsection{The main result}

In the present paper, we prove a stronger version of Theorem \ref{Main1}.  
We show that if a heteroclinic network $\Gamma$ is asymptotically stable then all connections corresponding to the most positive expanding eigenvalues
of the linearization at the equilibria will generically form a part of an attractor in the sense of \cite{Melbourne}. Although the existence of the heteroclinic cycle is not $C^1$-structurally stable, the attracting channel is robust. Numerical results described in \cite{MTHAR} show that the action of small noise does not destroy the channel.  The term ``generic'' corresponds to an open condition that be specified later and has the same flavour as \cite[Lemma 2.3(h)]{Melbourne}.  
\begin{theorem}
\label{Main}
If  (\ref{general}) satisfies the Hypotheses \textbf{(H1)}--\textbf{(H4)} then, generically, the cycle $\Gamma^p\subset \Gamma$ corresponding to the most positive expanding eigenvalues of the linearization at the equilibria is predominantly asymptotically stable.
\end{theorem}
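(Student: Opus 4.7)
The plan is to build the proof around the cycle $\Gamma^p$ by analysing, at each node $p_i$, the local transition map in smooth linearising coordinates, identifying an exclusion "wedge" whose complement contains every trajectory that aligns with the strong unstable direction, and then iterating this wedge picture around the cycle.  By the non-resonance assumption (H2) I would invoke a Sternberg-type smooth linearisation on $U_i$ and introduce cross sections $\Sigma_i^{\text{in}}$ (transverse to $W^s(p_i)$, containing the incoming connection) and $\Sigma_i^{\text{out}}$ (transverse to $W^u(p_i)$, containing the outgoing connection $[p_i\to p_{i+1}]$ along the leading eigendirection).  The local flow in these coordinates yields an explicit transition time $\tau\sim -\lambda_1^{(i)^{-1}}\log h$ for a point at distance $h$ from $W^s(p_i)$, and the transition map takes the form of power laws in $h$ with the exponents $\alpha_i,\beta_i,\mu_i,\rho_i$ of (\ref{constants}).

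The core local estimate is a Strong $\lambda$-Lemma adapted from Deng.  Writing the image $T_i(\Sigma_i^{\text{in}})\cap\Sigma_{i+1}^{\text{in}}$ in coordinates, the component along $\lambda_1^{(i)}$ is renormalised to order one, while the remaining expanding components scale like $h^{1-\beta_i}$ and the stable ones like $h^{\mu_i}$.  Because $\alpha_i>1$ and $\mu_i>1$ (Hypothesis (H4)), the image concentrates in an exponentially thin sliver along the outgoing connection, and the set $\mathcal W_i\subset\Sigma_i^{\text{in}}$ of initial points whose trajectory veers off toward a weaker expanding direction is contained in a wedge with opening angle $O(h^{\theta_i})$ for some $\theta_i>0$ depending on the ratios $\alpha_i,\mu_i$.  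A straightforward computation then gives the measure-theoretic content of the wedge:
\[
\frac{\ell\bigl(B_\varepsilon(p_i)\cap\Sigma_i^{\text{in}}\setminus\mathcal W_i\bigr)}{\ell\bigl(B_\varepsilon(p_i)\cap\Sigma_i^{\text{in}}\bigr)}\longrightarrow 1\qquad\text{as }\varepsilon\to 0.
\]

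Next I would globalise: the flow along the heteroclinic connection $[p_i\to p_{i+1}]$ realises a diffeomorphism $\Psi_i:\Sigma_i^{\text{out}}\to\Sigma_{i+1}^{\text{in}}$, and the generic hypothesis of the theorem is that the image $d\Psi_i(T_{[p_i\to p_{i+1}]}\Sigma_i^{\text{out}})$ is transverse both to $W^s(p_{i+1})$ and to each weaker unstable eigenspace of $p_{i+1}$ at the landing point.  This is an open, densely satisfied $C^1$ condition, exactly in the spirit of \cite[Lemma~2.3(h)]{Melbourne}.  Under this transversality, $\Psi_i$ carries the wedge complement into another wedge complement with the same asymptotic density property, so the construction composes around the cycle $\Gamma^p$ and through the full return map $R$ on a chosen section.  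I would then define $N$ as the forward-invariant set obtained by intersecting, over all iterates of $R$, the pullbacks of the good (non-wedge) sets.  Each stage removes a set of exponentially shrinking relative measure, so summability yields $\ell(B_\varepsilon(\Gamma^p)\cap N)/\ell(B_\varepsilon(\Gamma^p))\to 1$ as $\varepsilon\to 0$, while trajectories in $N$ follow, by construction, the strongest expanding direction at every passage and thus accumulate on $\Gamma^p$; combined with the assumed attractor property of $\Gamma$, this establishes asymptotic stability relative to $N$ and hence predominant asymptotic stability of $\Gamma^p$.

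The main obstacle I anticipate is the third paragraph: keeping the wedge structure stable under composition when the Morse indices $u_i$ are large, because several weakly expanding directions then compete with the strong one and the wedge becomes a finite union of thinner wedges whose geometry must be tracked through the non-linear part of $\Psi_i$.  Making the generic transversality condition precise enough to ensure that no branch of the wedge is mapped tangent to a resonant direction, yet weak enough to be genuinely open and dense in $\mathcal{X}$, is the delicate technical point; the saddle-value bound $\mu_i>1$ is what ultimately saves the measure estimate by providing strictly positive exponents $\theta_i$ at each step.
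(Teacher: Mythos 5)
Your proposal follows essentially the same route as the paper: linearisation at each node under the non-resonance hypothesis, a cuspoidal wedge in $\Sigma_i^{\mathrm{in}}$ whose complement has asymptotically full relative measure, a generic transversality condition on the global maps ensuring cusps are not mapped into cusps, and iteration of the resulting inclusions around the cycle to get predominant asymptotic stability. The only notable differences are bookkeeping: in the paper the cusp measure estimate is driven by $\alpha_i=\lambda_1^{(i)}/\lambda_2^{(i)}>1$ while the saddle value $\mu_i>1$ controls the contraction of $E_i(\varepsilon)$ under $f_i$, and the wedge is defined by the single scalar inequality $1-\bigl(\tau_1^{(i)}(x^{(i)})\bigr)^2<\varepsilon^2$, which handles all weaker expanding directions at once, so the multi-branch wedge geometry you worry about in your final paragraph does not actually arise.
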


In other words, Theorem \ref{Main} says that, generically, the strong unstable connections define a predominantly asymptotically stable Milnor attractor within $\Gamma$ that is not Lyapunov stable. Along the proof, the reader will realize that, given a small neighbourhood of $\Gamma^p$, there exists a cuspoidal region with asymptotically zero Lebesgue measure  such that trajectories are repelled from it as $t \rightarrow +\infty$. 
 
\subsection{Important remarks}
In what follows, we discuss the relevance of the hypotheses and we also refer some differences between our main result and others in the literature.
\medbreak
\begin{enumerate}
\item Our result is consistent with the numerics of \cite{ATHR, MTHAR} developed in the context of the Generalized Lotka-Volterra model. See also \cite{Bick}. Our result is completely compatible to that of \cite[Prop. 7.6 and 7.8]{Podvigina} in which the authors proved that finding trajectories that follow the strong unstable connection within a stable network is a lot more probable than finding those that follow the other connections. 
\medbreak
\item Throughout our proof, we assume the absence of transverse eigenvalues to the principal cycle $\Gamma^p$ -- see Remark \ref{Rem1}.
Moreover, we do not use the fact that the vector field $f$ commutes with the action of a compact Lie group. Typically, the fixed-point subspaces forces the generic assumption of Theorem \ref{Main} to fail.
\medbreak
 \item Theorem \ref{Main} is remarkably different to that of \cite{LR, Rodrigues3} in which the leading eigenvalues at the equilibria are non-real. Due to the transversality of the invariant manifolds combined with complex eigenvalues, the horseshoe dynamics does not trap most solutions in the neighbourhood of the cycle. In this case, nearby trajectories seem to be equally distributed between the different connections.
 \medbreak
 \end{enumerate}
\subsection{Kirk and Silber example}
The authors of \cite{KS}  discussed the case of two competing cycles in a system of ordinary differential equations in $\RR^4$ with $\ZZ^4_2$--symmetry. Their system has four equilibria $\xi_j$ on the coordinate axes at $x_j=1$, $j=1,2,3,4$, and two competing cycles (see Figure \ref{KS}):
$$
\xi_1 \rightarrow \xi_2 \rightarrow \xi_3  \rightarrow \xi_1 \qquad \text{and}  \qquad \xi_1 \rightarrow \xi_2 \rightarrow \xi_4  \rightarrow \xi_1.
$$
Note that there is an extra saddle equilibrium point whose unstable manifold is two-dimensional, which we call $\eta$.  The one-dimensional cycles share the connection $[\xi_1 \rightarrow \xi_2 ]$ and the unstable manifold of $\xi_2$ is two-dimensional. Moreover, there are two positive coefficients in the differential equation, $e_{23}>0$ and $e_{24}>0$, that correspond to the expanding eigenvalues of $\xi_2$. Numerics allow to prove that, when $e_{23}>e_{24}$, solutions leaving $\xi_2$ in the  direction of $\xi_4$ pass through a cuspoidal region abutting the connection $[\xi_1 \rightarrow \xi_2]$, where both cycles may be attracting. This example does not fit in our study because the connections from $\xi_2$ to $\xi_3$ involve a one-dimensional and a two-dimensional connecting sets.

\begin{figure}[ht]
\begin{center}
\includegraphics[height=5cm]{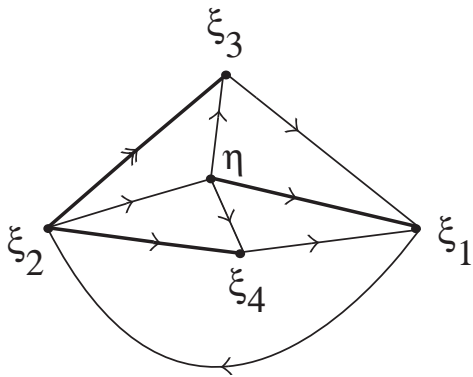}
\end{center}
\caption{\small Representation of the simplest configuration for the network considered in \cite{KS} (simplified version). The bold  lines represent two-dimensional connections; thin lines correspond to one-dimensional connections. Double arrows correspond to the strong expanding directions (case $e_{23}>e_{24}$); $\eta$ corresponds to the extra saddle equilibrium. }
\label{KS}
\end{figure}

\section{Linearization and Return Maps}
\label{Linearization}
In order to study homo and heteroclinic bifurcations, two approaches have been taken. In the first one, due to Shilnikov and rigorously proved in Deng \cite{Deng1}, one rewrites the differential equation in its integral form and uses smoothness results for the integral equations to derive approximations of the Poincar\'e map. A different technique, used in \cite{Tresser} and coworkers, uses linearization results obtained via a normal form procedure. In this paper, we are going to use the second approach; we establish local coordinates near the hyperbolic equilibria and define some terminology that will be used in the rest of the paper.
\medbreak

Let $i \in \{1, \ldots, N\}$. Since $p_i$ is a hyperbolic equilibrium of (\ref{general}), by the invariant manifold theory \cite{HPS}, there exists a $C^2$ local coordinates $(x,y)\in \RR^{n}$ in the neighbourhood of $p_i$ so that the local stable manifold of $p_i$ is $x=0$ and the local unstable manifold is $y=0$. This means that we can essentially reduce the problem by looking at some small neighbourhood of $p_i$.
In this case, the linearization of the vector field $f$ at $p_i$ may be represented by a diagonal matrix after a $C^1$ change of coordinates. Assume that we may decompose the tangent space into:
\begin{equation}
\label{splitting}
T_{p_i} \RR^{n}= T{c_i}\oplus T{e_i},
\end{equation}
where $T{c_i}$ and  $T{e_i}$ are the contracting and the expanding eigenspace at $p_i$, respectively (details in \cite{KM1}).
\begin{remark}
\label{Rem1}
With the direct sum (\ref{splitting}), we are using explicitly  that there are no transverse nor radial eigenvalues. Nevertheless our main result is still valid if all eigenvalues in these directions are less than zero.
\end{remark}

By \textbf{(H2)}, the expanding and contracting tangent spaces $Te_i$ and $Tc_i$ are assumed to be $u_i$ and $s_i$--dimensional. Sometimes it will be useful to write $x\in \RR^{u_i}$ and $y\in \RR^{s_i}$ as $(x_1,\ldots, x_{u_i})\in \RR^{s_i}$ and $(y_1,\ldots, y_{s_i})\in \RR^{s_i}$. In a neighbourhood of $p_i$, hereafter called $U_i$, we transform the vector field into the linearized form:
\begin{equation}
\label{linearization1}
\left\{ 
\begin{array}{l}
\dot{x}^{(i)}=A^{(i)} x^{(i)} \\ \\

\dot{y}^{(i)}=B^{(i)} y^{(i)} 
\\
\end{array}
\right.
\end{equation}
where $A$ and $B$ have all positive and negative eigenvalues, respectively. The conditions for $C^1$--linearization of Hartman  \cite{Hartman} of \textbf{(H2)} show that linearization is not possible for subsets of points on the lines defined by resonances  (\emph{i.e.} is a generic condition). 
The restrictions are a
set of zero Lebesgue measure in the parameter space and place no serious constraint on the analysis that follows.

\begin{figure}\begin{center}
\includegraphics[height=6cm]{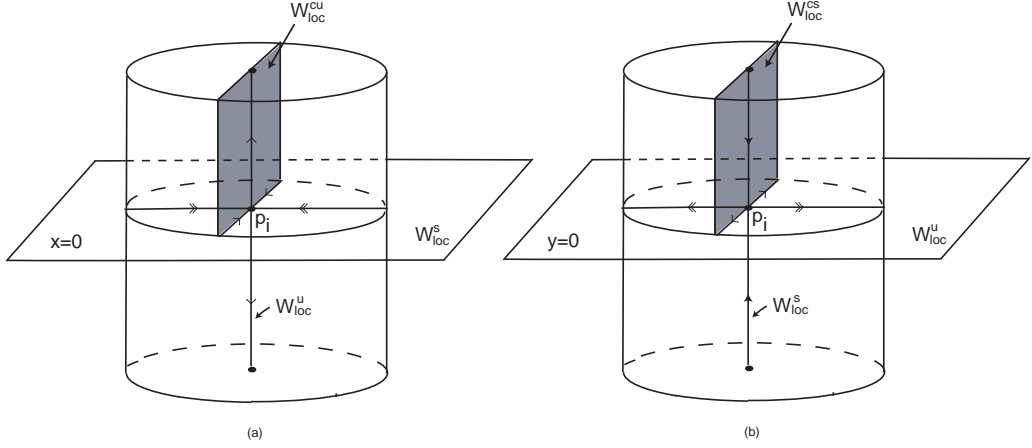}
\end{center}
\caption{\small Neighbourhood of $p_i$. Representation of the cross sections (a): $\Sigma^{in}_i$ and (b): $\Sigma^{out}_i$. Trajectories starting at interior points of $\Sigma^{in}_i$ go inside $U_i$ in positive time. Trajectories starting at interior points of $\Sigma^{out}_i$ go outside $U_i$ in positive time. }
\label{neigh1}
\end{figure}

\subsection{Local coordinates and cross sections}
Let $i\in \{1,\ldots, N\}$. By scaling the local coordinates of the neighbourhood $U_i$ of the equilibrium $p_i$, put $A^{(i)}$ and $B^{(i)}$ into Jordan normal form and write:
$$
A^{(i)}= \begin{pmatrix} \lambda_1^{(i)} & 0 &... &0 \\ 0 & \lambda_2^{(i)} & ... &0 \\ 0 &0&... & 0 \\ 0 &0 &... &\lambda_{u_i}^{(i)}   \end{pmatrix}
\qquad \text{and}
\qquad 
B^{(i)}= \begin{pmatrix} -\lambda_{u_i+1}^{(i)} & 0 &... &0 \\ 0 & -\lambda_{u_i+2}^{(i)} & ... &0 \\ 0 &0&... & 0 \\ 0 &0 &... &-\lambda_{n}^{(i)}   \end{pmatrix}.
$$
Define the cross section $\Sigma^{in}_i\subset \RR^{n}$ as :
$$
\Sigma^{in}_i=\left\{\left(x^{(i)},y^{(i)}\right) : \left\|y^{(i)}\right\|=1\right\}=\left\{\left(x^{(i)},\theta^{(i)}\right), x^{(i)} \in \RR^{u_i}, \theta^{(i)} \in \EU^{s_i-1}\right\}.
$$
This set intersects all trajectories approaching some neighbourhood of $p_i$. Analogously, we may define another cross section:
$$
\Sigma^{out}_i=\left\{\left(x^{(i)},y^{(i)}\right) : \left\|x^{(i)}\right\|=1\right\}=\left\{\left(\phi^{(i)},y^{(i)}\right), \phi^{(i)} \in \EU^{u_i-1}, y^{(i)} \in \RR^{s_i} \right\}.
$$

These cross sections, depicted in Figure \ref{neigh1}, are transverse to the flow. By construction, trajectories starting at interior points of $\Sigma^{in}_i$ go inside $U_i$ in positive time. Trajectories starting at interior points of $\Sigma^{out}_i$ go outside $U_i$ in positive time.  The sets $\EU^{s_i-1}$ and $\EU^{u_i-1}$ are obtained by identifying the opposite faces of $\Sigma_i^{in}$ and $\Sigma_i^{out}$.  Intersections between the local invariant manifolds at $p_i$ and cross sections are parametrized by:
$$ 
W^s_{loc}(p_i)\cap \Sigma^{in}_i=\left\{x^{(i)}=0_{\RR^{u_i}}\right\} 
\qquad \text{and} \qquad 
W^u_{loc}(p_i)\cap \Sigma^{out}_i=\left\{y^{(i)}=0_{\RR^{s_i}}\right\}$$

\subsection{Time of flight}
Let $i\in \{1,\ldots, N\}$. Hereafter, when we refer $\RR^{u_i}\subset \Sigma_i^{in}$ we mean $\RR^{u_i}\times \{0_{\EU^{s_i-1}}\}\subset \Sigma_i^{in}$. The same for $\Sigma_i^{out}$.
Define implicitly the map $$T_i: \RR^{u_i}\backslash\{0_{\RR^{u_i}}\}\subset \Sigma_i^{in} \rightarrow \RR^+_0$$ as $\left\|x^{(i)}\right\|=1.$ In other terms, the map $T_i$ is defined in such a way that:
$$
 \sum_{j=1}^{u_i} \left(x_j^{(i)}\right)^2 \exp\left({2\lambda_j^{(i)} T_i\left(x^{(i)}\right)}\right)=1.
$$
The map $T_i$ gives the transit time that the solution with initial condition $\left(x^{(i)},y^{(i)}\right)$ spends inside $U_i$.
Define also  the map $\tau^{(i)}: \RR^{u_i}\backslash\{0_{\RR^{u_i}}\} \rightarrow \EU^{u_i-1} $
by
$$
\tau^{(i)}\left(x_1^{(i)}, \ldots, x_{u_i}^{(i)}\right) = \left(\tau_1^{(i)}\left(x^{(i)}\right), \ldots, \tau_{u_i}^{(i)}\left(x^{(i)}\right)\right)
\quad
\text{where} 
\quad 
\tau_j^{(i)}\left(x^{(i)}\right)=\exp\left({\lambda_j T_i\left(x^{(i)}\right)}\right) x_j^{(i)},$$
for $ j=1,\ldots, {u_i}.$
For all $i\in\{1,\ldots, N\}$ and $j\in\{1,\ldots, n\}$, we know that $\tau^{(i)} \in \EU^{{u_i}-1}$. Thus $\left|\tau_j^{(i)}\right|\leq 1$ for all $j$. In particular, for $j=1$, we get:
$$
\left|\tau_1^{(i)}\left(x^{(i)}\right)\right|=\left|\exp\left(\lambda_1 T_i\left(x^{(i)}\right)\right) x_1^{(i)}\right|=\left|\exp\left(\lambda_1 T_i\left(x^{(i)}\right)\right)\right|\left| x_1^{(i)}\right|\leq 1.
$$
It follows straightforwardly that:
\begin{equation}
\label{important to lemma 4}
\left| x_1^{(i)}\right|^{-\frac{1}{\lambda_1^{(i)}}}\geq \exp \left(T_i\left(x^{(i)}\right)\right).
\end{equation}

\bigbreak
 The following three technical results will be useful in the sequel.
 
\begin{lemma}
\label{lemma3}
Let $i\in \{1,\ldots, N\}$. The following two inequalities hold:
$$
\ln \left\|x^{(i)}\right\|^{-\frac{1}{\lambda_1^{(i)}}} < T_i\left(x^{(i)}\right) < \ln \left\|x^{(i)}\right\|^{-\frac{1}{\lambda_{u_i}^{(i)}}}.
$$
\end{lemma}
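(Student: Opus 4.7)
The proof is a direct squeeze argument from the defining equation of $T_i$. Recall that $T_i(x^{(i)})$ is defined implicitly by
\[
\sum_{j=1}^{u_i}\bigl(x_j^{(i)}\bigr)^{2}\exp\!\bigl(2\lambda_j^{(i)}T_i(x^{(i)})\bigr)=1,
\]
and that the eigenvalues are ordered $\lambda_1^{(i)}>\lambda_2^{(i)}>\cdots>\lambda_{u_i}^{(i)}>0$. My plan is simply to bracket each summand using the extremal eigenvalues and then solve for $T_i$.

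For the \emph{lower} bound on $T_i$, I would use that $\lambda_j^{(i)}\le\lambda_1^{(i)}$ for every $j\in\{1,\ldots,u_i\}$, so each exponential factor satisfies $\exp(2\lambda_j^{(i)}T_i)\le\exp(2\lambda_1^{(i)}T_i)$. Substituting into the defining relation gives
\[
1=\sum_{j=1}^{u_i}\bigl(x_j^{(i)}\bigr)^{2}\exp\!\bigl(2\lambda_j^{(i)}T_i\bigr)\le\exp\!\bigl(2\lambda_1^{(i)}T_i\bigr)\,\bigl\|x^{(i)}\bigr\|^{2},
\]
which after taking logarithms yields $T_i(x^{(i)})\ge -\lambda_1^{(i)\,-1}\ln\|x^{(i)}\|=\ln\|x^{(i)}\|^{-1/\lambda_1^{(i)}}$. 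Symmetrically, using $\lambda_j^{(i)}\ge\lambda_{u_i}^{(i)}$ I get $1\ge\exp(2\lambda_{u_i}^{(i)}T_i)\|x^{(i)}\|^{2}$, and hence $T_i(x^{(i)})\le\ln\|x^{(i)}\|^{-1/\lambda_{u_i}^{(i)}}$. Note that $\|x^{(i)}\|<1$ on the relevant portion of $\Sigma_i^{in}$ (points lie in the small neighbourhood $U_i$ of $p_i$), so $\ln\|x^{(i)}\|<0$ and the two bounds are consistent in sign.

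The only delicate point is the \emph{strictness} of the two inequalities. Equality in the lower bound would force $\lambda_j^{(i)}=\lambda_1^{(i)}$ on the support of the sum, i.e.\ $x^{(i)}$ supported solely on the leading expanding eigenaxis; equality in the upper bound would force $x^{(i)}$ supported solely on the weakest expanding eigenaxis. Since $u_i\ge 2$ and the eigenvalues are pairwise distinct by the non-resonance assumption \textbf{(H2)}, these degenerate cases lie on proper linear subspaces of $\RR^{u_i}$ which are not invariant under the flow direction we follow; after an arbitrarily small positive time the trajectory acquires components in the remaining eigendirections and both bounds become strict. This is the step that deserves the most care, but it reduces to noting that the linearised flow spreads any generic initial condition across all expanding coordinates, so the extremal-eigenvalue bounds are never attained by actual transit times of points in $\Sigma_i^{in}\setminus W^{ss}(p_i)$.

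Writing $\ln\|x^{(i)}\|^{-1/\lambda_1^{(i)}}<T_i(x^{(i)})<\ln\|x^{(i)}\|^{-1/\lambda_{u_i}^{(i)}}$ then follows. No further machinery beyond the implicit definition of $T_i$ and the monotonicity of the exponential is required; the two-sided estimate will later feed into the control of the transition maps (and, via \eqref{important to lemma 4}, into the wedge estimates that drive the main theorem).
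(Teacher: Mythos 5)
Your squeeze argument is exactly the paper's proof: the paper bounds each exponent $\lambda_j^{(i)}$ by $\lambda_1^{(i)}$ (resp.\ $\lambda_{u_i}^{(i)}$) inside the defining sum $\sum_{j=1}^{u_i}\bigl(x_j^{(i)}\bigr)^2\exp\bigl(2\lambda_j^{(i)}T_i\bigr)=1$ and solves for $T_i$, obtaining the two stated inequalities. The only divergence is that the paper asserts strictness directly in the first implication (which, like your weak-inequality-plus-degeneracy discussion, silently ignores the measure-zero set of points supported on a single extremal eigenaxis, where equality actually holds); your dynamical explanation for discarding those cases is unnecessary and slightly misplaced, since $T_i$ depends only on the fixed entry point $x^{(i)}$ rather than on how the trajectory subsequently spreads, but this does not affect the substance of the argument.
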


\begin{proof}
By \textbf{(H2)}, since $\lambda_1^{(i)}>...>\lambda_{{u_i}}^{(i)}>0$, the following implications hold:

\begin{align*}
\sum_{j=1}^{u_i} \left(x_j^{(i)}\right)^2 \exp\left({2 \lambda_j^{(i)} T_i(x^{(i)})}\right)=1 \quad & \Rightarrow  \sum_{j=1}^{u_i} \left(x_j^{(i)}\right)^2 \exp\left({2 \lambda_1^{(i)} T_i(x^{(i)})}\right)>1 \\
& \Leftrightarrow \exp\left({2 \lambda_1^{(i)}} T_i\left(x^{(i)}\right)\right)  \sum_{j=1}^{u_i} \left(x_j^{(i)}\right)^2 >1 \\
& \Leftrightarrow  \exp\left({2 \lambda_1^{(i)} T_i\left(x^{(i)}\right)}\right)>\frac{1}{\left\|x^{(i)}\right\|^2} \\
& \Leftrightarrow  \lambda_1^{(i)} T_i\left(x^{(i)}\right)>-\ln\left({\left\|x^{(i)}\right\|}\right) \\
& \Leftrightarrow  \ln \left\|x^{(i)}\right\|^{-\frac{1}{\lambda_1^{(i)}}} < T_i\left(x^{(i)}\right).
\end{align*}

The first inequality of the lemma is shown; the  proof of the other is analogous.
\end{proof}
In what follows, recall that if $i\in \{1,\ldots, N\}$ then $\beta_i=(\alpha_i)^{-1}=\frac{\lambda_2^{(i)}}{\lambda_1^{(i)}}<1$.

\begin{lemma} 
\label{lemma4}
Let $i\in \{1,\ldots, N\}$. The following inequality holds: 
$$
1-\left(\tau_1^{(i)}\left(x^{(i)}\right)\right)^2< \left|x_1^{(i)}\right|^{-2\beta_i} \sum_{j=2}^{u_i}  \left(x_j^{(i)} \right)^2
$$
\end{lemma}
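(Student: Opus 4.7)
My plan is to rewrite the left-hand side as a sum over the remaining $\tau$-coordinates, use the ordering of eigenvalues to bound each exponential factor uniformly by the one with $\lambda_2^{(i)}$, and then apply inequality \eqref{important to lemma 4} to convert that exponential into the desired power of $|x_1^{(i)}|$.

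In detail: since $\tau^{(i)}(x^{(i)}) \in \EU^{u_i-1}$, we have $\sum_{j=1}^{u_i} \bigl(\tau_j^{(i)}(x^{(i)})\bigr)^2 = 1$, hence
\[
1 - \bigl(\tau_1^{(i)}(x^{(i)})\bigr)^2 \;=\; \sum_{j=2}^{u_i} \bigl(\tau_j^{(i)}(x^{(i)})\bigr)^2 \;=\; \sum_{j=2}^{u_i} \exp\bigl(2\lambda_j^{(i)} T_i(x^{(i)})\bigr)\, (x_j^{(i)})^2 .
\]
By \textbf{(H2)}, $\lambda_j^{(i)} \leq \lambda_2^{(i)}$ for every $j\geq 2$, and $T_i(x^{(i)}) > 0$, so $\exp\bigl(2\lambda_j^{(i)} T_i\bigr) \leq \exp\bigl(2\lambda_2^{(i)} T_i\bigr)$ and therefore
\[
1 - \bigl(\tau_1^{(i)}(x^{(i)})\bigr)^2 \;\leq\; \exp\bigl(2\lambda_2^{(i)} T_i(x^{(i)})\bigr) \sum_{j=2}^{u_i} (x_j^{(i)})^2 .
\]

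To finish, I would raise inequality \eqref{important to lemma 4} to the power $2\lambda_2^{(i)}$: since $\bigl|x_1^{(i)}\bigr|^{-1/\lambda_1^{(i)}} \geq \exp\bigl(T_i(x^{(i)})\bigr)$, taking $2\lambda_2^{(i)}$-th powers gives
\[
\bigl|x_1^{(i)}\bigr|^{-2\beta_i} \;=\; \bigl|x_1^{(i)}\bigr|^{-2\lambda_2^{(i)}/\lambda_1^{(i)}} \;\geq\; \exp\bigl(2\lambda_2^{(i)}\,T_i(x^{(i)})\bigr),
\]
and substituting into the previous display yields the lemma with $\leq$ in place of $<$.

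The only delicate point is the strictness of the inequality, which I would address as follows. If $(x_2^{(i)},\ldots,x_{u_i}^{(i)}) = 0$ both sides are zero, so the statement is vacuous (or excluded); in the remaining case at least one $x_j^{(i)}$ with $j\geq 2$ is nonzero, so the corresponding $\tau_j^{(i)} = \exp(\lambda_j^{(i)} T_i) x_j^{(i)}$ is nonzero, which forces $\bigl|\tau_1^{(i)}\bigr| < 1$ strictly; retracing the derivation of \eqref{important to lemma 4} from $|\tau_1^{(i)}| \leq 1$ shows that \eqref{important to lemma 4}, and hence the final bound, becomes strict. This strictness argument is the only step requiring care; the rest is bookkeeping with the constants $\alpha_i,\beta_i,\mu_i,\rho_i$ introduced in \eqref{constants}.
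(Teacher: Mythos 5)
Your proof is correct and follows essentially the same route as the paper: expand $1-\bigl(\tau_1^{(i)}\bigr)^2$ as $\sum_{j\geq 2}\bigl(\tau_j^{(i)}\bigr)^2$, bound each exponential factor by the one with $\lambda_2^{(i)}$, and convert $\exp\bigl(2\lambda_2^{(i)}T_i\bigr)$ into $\bigl|x_1^{(i)}\bigr|^{-2\beta_i}$ via inequality~(\ref{important to lemma 4}). If anything, your handling of strictness is more careful than the paper's, which places the strict step at $\exp\bigl(2\lambda_j^{(i)}T_i\bigr)<\exp\bigl(2\lambda_2^{(i)}T_i\bigr)$ (an equality when $j=2$) and, like the statement of the lemma itself, silently ignores the degenerate case $x_2^{(i)}=\cdots=x_{u_i}^{(i)}=0$ that you correctly flag.
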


\begin{proof}
The proof runs along the same lines to that of Lemma \ref{lemma3}. By \textbf{(H2)}, since $\lambda_2^{(i)}>...>\lambda_{{u_i}}^{(i)}>0$, the following equalities and inequalities are valid:
\begin{align*}
   1-\left(\tau_1^{(i)}\left(x^{(i)}\right)\right)^2 & = \sum_{j=2}^{u_i} \left(\tau_j^{(i)}\left(x^{(i)}\right)\right)^2 \\
    & = \sum_{j=2}^{u_i} \exp\left(2\lambda_j^{(i)} T_i\left(x^{(i)}\right)\right) \left(x_j^{(i)}\right)^2   \\
    & = \sum_{j=2}^{u_i} \exp\left(T_i\left(x^{(i)}\right)\right)^{2\lambda_j^{(i)}} \left(x_j^{(i)}\right)^2 \\
    & < \sum_{j=2}^{u_i} \exp\left(T_i\left(x^{(i)}\right)\right)^{2\lambda_2^{(i)}} \left(x_j^{(i)}\right)^2 \\
    & = \exp\left(T_i\left(x^{(i)}\right)\right)^{2\lambda_2^{(i)}}  \sum_{j=2}^{u_i}  \left(x_j^{(i)}\right)^2 \\
     & \leq\left|x_1^{(i)}\right|^{-2\beta_i}  \sum_{j=2}^{n-s_i}  \left(x_j^{(i)}\right)^2.
     \end{align*}
     The last inequality follows from inequality (\ref{important to lemma 4}). 
\end{proof}

\begin{lemma}
\label{lemma com k} Let $i\in \{1,\ldots, N\}$. Let $k \in \RR^+\backslash\{0\}$ be such that  $\left(x_1^{(i)}\right)^2>k \sum_{j=2}^{u_i}\left(x_j^{(i)}\right)^2.$ Then:
$$
1-\left(\tau_1\left(x^{(i)}\right)\right)^2<k ^{-\beta_i} \left\| x^{(i)}\right\|^{2-2\beta_i}.
$$
\end{lemma}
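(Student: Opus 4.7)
The idea is to bootstrap from Lemma~\ref{lemma4} by splitting the sum $\sum_{j=2}^{u_i}\left(x_j^{(i)}\right)^2$ into two multiplicative pieces of exponents $\beta_i$ and $1-\beta_i$, applying the hypothesis only to the first piece. This is essentially a H\"older-type trick that is tailored so that the factor $\left|x_1^{(i)}\right|^{-2\beta_i}$ coming out of Lemma~\ref{lemma4} is cancelled exactly, leaving the desired $k^{-\beta_i}\|x^{(i)}\|^{2-2\beta_i}$.

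More concretely, I would proceed as follows. First, invoke Lemma~\ref{lemma4} to obtain the starting inequality
\[
1-\left(\tau_1\left(x^{(i)}\right)\right)^2 \;<\; \left|x_1^{(i)}\right|^{-2\beta_i} \sum_{j=2}^{u_i}\left(x_j^{(i)}\right)^2.
\]
Next, write
\[
\sum_{j=2}^{u_i}\left(x_j^{(i)}\right)^2 \;=\; \left(\sum_{j=2}^{u_i}\left(x_j^{(i)}\right)^2\right)^{\!\beta_i}\left(\sum_{j=2}^{u_i}\left(x_j^{(i)}\right)^2\right)^{\!1-\beta_i}.
\]
For the first factor, use the hypothesis $\left(x_1^{(i)}\right)^2>k\sum_{j=2}^{u_i}\left(x_j^{(i)}\right)^2$ (which is allowed since $\beta_i\in(0,1)$ preserves the inequality when raised to that power) to bound
\[
\left(\sum_{j=2}^{u_i}\left(x_j^{(i)}\right)^2\right)^{\!\beta_i} \;<\; \frac{\left(x_1^{(i)}\right)^{2\beta_i}}{k^{\beta_i}}.
\]
For the second factor, bound it via the trivial estimate $\sum_{j=2}^{u_i}\left(x_j^{(i)}\right)^2\leq \|x^{(i)}\|^2$ and, using again that $1-\beta_i>0$, obtain
\[
\left(\sum_{j=2}^{u_i}\left(x_j^{(i)}\right)^2\right)^{\!1-\beta_i} \;\leq\; \left\|x^{(i)}\right\|^{2-2\beta_i}.
\]

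Finally, plugging these two estimates back yields
\[
\left|x_1^{(i)}\right|^{-2\beta_i}\sum_{j=2}^{u_i}\left(x_j^{(i)}\right)^2 \;<\; \left|x_1^{(i)}\right|^{-2\beta_i} \cdot \frac{\left(x_1^{(i)}\right)^{2\beta_i}}{k^{\beta_i}} \cdot \left\|x^{(i)}\right\|^{2-2\beta_i} \;=\; k^{-\beta_i}\left\|x^{(i)}\right\|^{2-2\beta_i},
\]
and combining this with the starting inequality gives the claimed bound.

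I do not foresee a real obstacle: the statement is a clean consequence of Lemma~\ref{lemma4} once one has the idea of splitting the sum with the $\beta_i$--$(1-\beta_i)$ exponents. A naive approach that applies the hypothesis once (to bound the whole sum by $(x_1^{(i)})^2/k$) only produces a factor $k^{-1}$, which is weaker than the required $k^{-\beta_i}$ when $k<1$; the split is precisely what balances the two regimes and makes the estimate uniform in $k$. The only minor care needed is to verify that $\beta_i\in(0,1)$ ensures monotonicity of $t\mapsto t^{\beta_i}$ and $t\mapsto t^{1-\beta_i}$, which is immediate from Hypothesis~\textbf{(H2)}.
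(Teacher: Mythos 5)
Your proof is correct and follows essentially the same route as the paper: both start from the bound $1-\bigl(\tau_1^{(i)}\bigr)^2<\bigl|x_1^{(i)}\bigr|^{-2\beta_i}\sum_{j=2}^{u_i}\bigl(x_j^{(i)}\bigr)^2$ of Lemma~\ref{lemma4} and then use the hypothesis with exponent $\beta_i$ to trade $\bigl|x_1^{(i)}\bigr|^{-2\beta_i}$ for $k^{-\beta_i}$, leaving the $(1-\beta_i)$-power of the tail sum to be bounded by $\|x^{(i)}\|^{2-2\beta_i}$. Your explicit $\beta_i$--$(1-\beta_i)$ splitting is just a more transparent writing of the paper's final (and rather terse) inequality.
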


\begin{proof}
Let us fix a $k \in  \RR^+\backslash \{0\}$ such that $\left(x_1^{(i)}\right)^2>k \sum_{j=2}^{u_i}\left(x_j^{(i)}\right)^2$. Thus, the following equalities and inequalities hold:
\begin{align*}
   1-\left(\tau_1^{( i)}\left(x^{( i)}\right)\right)^2 & = \sum_{j=2}^{u_i} \left(\tau_j^{( i)}\left(x^{( i)}\right)\right)^2 \\
     & = \sum_{j=2}^{u_i} \exp\left(2\lambda_j^{( i)} T_{ i}\left(x^{( i)}\right)\right) \left(x_j^{( i)}\right)^2   \\
     & < \sum_{j=2}^{u_i} \exp\left(T_{ i}\left(x^{( i)}\right)\right)^{2\lambda_2^{( i)}} \left(x_j^{( i)}\right)^2 \\ 
    & = \exp\left(T_i\left(x^{(i)}\right)\right)^{2\lambda_2^{(i)}}  \sum_{j=2}^{u_i}  \left(x_j^{(i)}\right)^2  \\
     & \leq \left|x_1^{(i)}\right|^{-2\beta_i}  \sum_{j=2}^{u_i}  \left(x_j^{(i)}\right)^2 \qquad (\text{By Remark } \ref{important to lemma 4})\\
     & <  k ^{-\beta_i} \left\| x^{( i)}\right\|^{2-2{\beta_i}}\\
      \end{align*}
      The hypothesis is used on the last inequality. 
\end{proof}

\begin{figure}\begin{center}
\includegraphics[height=5cm]{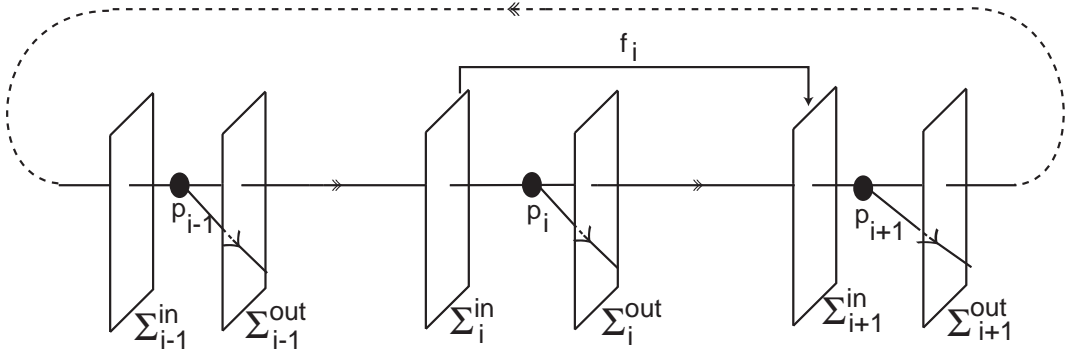}
\end{center}
\caption{\small Representation of the maps  $f_i: \Sigma_i^{in} \rightarrow \Sigma_{i+1}^{in}$ and $F: \Sigma_1^{in} \rightarrow \Sigma_{1}^{in}$.  }
\label{local_map}
\end{figure}

\subsection{The return map}
As in \cite{AC}, we are interested in a transition model, that we denote by $f_i$, that carry points from $\Sigma_i^{in}$ to $\Sigma_{i+1}^{in}$. As depicted in Figure \ref{local_map}, the map $f_i: \Sigma_i^{in} \rightarrow \Sigma_{i+1}^{in}$ may be written as:
$$
f_i\left(x^{(i)},y^{(i)}\right)=\left(x^{(i+1)}, y^{(i+1)}\right)
$$
where:
  \begin{equation}
\label{linearization2}
\left\{ 
\begin{array}{l}
x^{(i+1)}= M^{(i)}\left(\tau^{(i)} \left(x^{(i)}\right)\right)\left[\exp\left({-\lambda_{u_i+1}^{(i)} T_i\left(x^{(i)}\right)}\right)y_1^{(i)}, ..., \exp\left({-\lambda_{n}^{(i)} T_i\left(x^{(i)}\right)}\right)y_{s_i}^{(i)}\right]\in \RR^{u_{i+1}}\bigbreak \\ 

y^{(i+1)}=\gamma^{(i)}\left(\tau^{(i)}\left(x^{(i)}\right)\right)\in \EU^{s_{i+1}-1}
\end{array}
\right.
\end{equation}
where $N^\star +1 \equiv 1$ and  $M^{(i)}: \EU^{u_i-1} \rightarrow GL(\RR^{u_{i+1}})$ and $\gamma^{(i)}:\EU^{u_i-1} \rightarrow\EU^{s_{i+1}-1}$ are smooth maps. Here, by smooth, we mean at least $C^1$. 
\bigbreak

Now let  $F: \Sigma_1^{in} \rightarrow \Sigma_{1}^{in}$ be $F= f_{N^\star} \circ f_{N^\star-1} \circ \ldots \circ f_1$ defined as:
$$
F\left(x^{(1)},y^{(1)}\right)=\left(x^{(1)}_\star, y^{(1)}_\star\right)
$$
with:
  \begin{equation}
\label{linearization3}
\left\{ 
\begin{array}{l}
x^{(1)}_\star= M\left(\tau^{(1)} \left(x^{(1)}\right)\right) \left[\exp\left({-\lambda_{u_i+1}^{(N^\star)} T_{N^\star}\left(x^{(N^\star)}\right)}\right)y_1^{(N^\star)}, ..., \exp\left({-\lambda_{n}^{(N^\star)} T_{N^\star}\left(x^{(N^\star)}\right)}\right)y_{s_i}^{(N^\star)}\right] \bigbreak \\
 y^{(1)}_\star=\gamma\left(\tau^{(1)}\left(x^{(1)}\right)\right)
\end{array}
\right.
\end{equation}
where:
$$
\left\|M\right\|\leq \prod_{i=1}^{N^\star} \left\|M^{(i)}\right\| \qquad \text{and} \qquad \gamma= \gamma^{(N^\star)} \circ \ldots \circ \gamma^{(1)}.
$$

\section{Proof of the Main Result}
\label{Proof of the Main Theorem}

The main goal of this section is the proof of Theorem \ref{Main}.
First of all note that,  under conditions \textbf{(H1)--(H4)}, every trajectory starting in any small neighbourhood of $p_1$ remains in a neighbourhood of $\Gamma$ until it comes to a neighbourhood of $p_N$ -- details in \cite{AZR}. In the equivariant context, the stability of $\Gamma$ can be obtained using \cite{MTHAR} and \cite{KM1, KM2}. 

 The rest of the proof will be based in \cite{ATHR, AC, Brannath, Deng1, KS, Melbourne}.  We use the previous section to show that for asymptotically stable networks, under $N^\star$ open conditions on the space of parameters, the majority of trajectories follow a cycle formed by heteroclinic trajectories along the strongly unstable directions.
 
 \medbreak
 The proof will be divided into several lemmas.  
 If $X \subset \RR^n$, let ${X}^c$ be the set $\RR^n \backslash X$.

\subsection{A repelling cuspoidal region}
Let $i\in\{1, \ldots, N\}$.
As suggested by Figure \ref{transition_sphere} for $n=2$, define:
\begin{equation}
\label{arrows}
\left\{ 
\begin{array}{l}
e_{\pm}^{(i)} =(\pm 1, 0,\ldots, 0) \in \EU^{s_i-1} \subset \Sigma^{out}_i \bigbreak \\ 
y_{\pm}^{(i+1)} =\gamma^{(i)}\left(e_{\pm}^{(i)}\right) \in \EU^{s_{i+1}-1} \subset \Sigma^{in}_{i+1} \bigbreak  \\
b_{\pm}^{(i)} =(0, 0,\ldots, \pm 1) \in \EU^{s_{i+1}-1} \subset \Sigma^{in}_i
\end{array}
\right.
\end{equation}

\begin{figure}\begin{center}
\includegraphics[height=5cm]{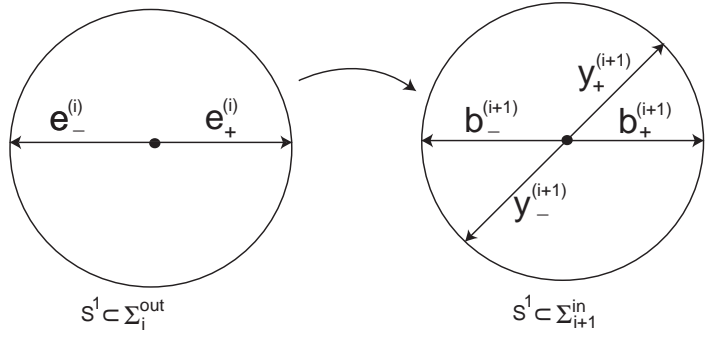}
\end{center}
\caption{\small Representation of $e_{\pm}^{(i)}\in \Sigma^{out}_i$, $y_{\pm}^{(i+1)}=\gamma^{(i)}\left(e_{\pm}^{(i)}\right) \in\Sigma^{in}_{i+1}$ and  $b_{\pm}^{(i)}\in\Sigma^{in}_{i+1}$, for $n=2$. }
\label{transition_sphere}
\end{figure}

Note that, by construction, the set $\Gamma^{p}$ corresponds to the set of connections that cross $\Sigma_i^{in}$ at $y^{(i)}=y^{(i)}_\pm$ and then $\Sigma_{i}^{out}$ at $x^{(i)}=e_\pm^{(i)}$. If $\delta>0$ is sufficiently small, let us define a $\delta$-neighbourhood of the heteroclinic connection $[p_{i-1} \rightarrow p_{i}]$ within $\Sigma_i^{in}$ as:
$$
E_i(\delta)=\left\{\left(x^{(i)},y^{(i)}\right) \in \Sigma_i^{in} : \left\|x^{(i)}\right\|<\delta\right\}
$$
and the two following sets:
\begin{equation}
\label{sets1}
\left\{ 
\begin{array}{l}
\mathcal{F}_i(\delta)= \left\{\left(x^{(i)},y^{(i)}\right) \in \Sigma_i^{in}: \min\left\{\left\|y^{(i)}-y_+^{(i)}\right\|, \left\|y^{(i)}-y_-^{(i)}\right\|\right\}<\delta\right\}\\
\\
B_i(\delta)=E_i(\delta)\cap \mathcal{F}_i(\delta)
\end{array}
\right.
\end{equation}
\bigbreak
Let $i\in\{1, \ldots, N\}$ and $\varepsilon>0$ sufficiently small. 

\begin{definition}
A $\varepsilon$-\emph{wedge} in $\Sigma_i^{in}$ is defined by:
\begin{align*}
 \mathcal{W}_i(\varepsilon)= & \left\{\left(x^{(i)},y^{(i)}\right)\in \Sigma_i^{in}: 1- \left(\tau_1^{(i)}\left(x^{(i)}\right)\right)^2<\varepsilon^2\right\} \\
 = & \left\{\left(x^{(i)},y^{(i)}\right)\in \Sigma_i^{in}: 1- \exp\left(2\lambda_1^{(i)}T_i\left(x^{(i)}\right)\right)x_1^{(i)}<\varepsilon^2\right\}. 
\end{align*}
\end{definition}

\begin{lemma}
\label{facil}
The $\varepsilon$-\emph{wedge} $\mathcal{W}_i(\varepsilon)$  contains all initial conditions in $\Sigma_i^{in}$ that are mapped to within $\varepsilon$ of $W^{uu}_{loc}(p_i)$ by the projection of the local map (near $p_i$) induced by the linearization onto $\EU^{s_i-1}\subset \Sigma_i^{out}$. 
\end{lemma}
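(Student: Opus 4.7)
The plan is to unwind the definitions by identifying in explicit coordinates what ``the projection of the local map onto $\EU^{u_i-1}\subset \Sigma_i^{out}$'' does, locating the image of $W^{uu}_{loc}(p_i)$ there, and then translating the $\varepsilon$--closeness condition algebraically into the wedge inequality. First I would use the linearized equations (\ref{linearization1}) on $U_i$: a trajectory with initial data $\bigl(x^{(i)},y^{(i)}\bigr)\in \Sigma_i^{in}$ evolves as $\bigl(\exp(A^{(i)}t)\,x^{(i)},\exp(B^{(i)}t)\,y^{(i)}\bigr)$, and by the very definition of $T_i$ it meets $\Sigma_i^{out}$ at time $t=T_i(x^{(i)})$; at that moment its $x$--component is
\[
\exp\bigl(A^{(i)}\,T_i(x^{(i)})\bigr)\, x^{(i)}=\tau^{(i)}(x^{(i)})\in \EU^{u_i-1}.
\]
Hence the projection onto the $x$--sphere of the local transition map is precisely the map $\tau^{(i)}$ already defined in Section~\ref{Linearization}.

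Next I would locate $W^{uu}_{loc}(p_i)\cap \Sigma_i^{out}$. Because the $C^1$--linearization available under \textbf{(H2)} conjugates the strong unstable manifold with the eigenspace of $\lambda_1^{(i)}$ (the $x_1$--axis in the Jordan coordinates of (\ref{linearization1})), this intersection reduces to the two antipodal points $e_\pm^{(i)}=(\pm 1,0,\ldots,0)$ already introduced in (\ref{arrows}). The hypothesis that $(x^{(i)},y^{(i)})$ is mapped within $\varepsilon$ of $W^{uu}_{loc}(p_i)$ thus reads $\min\bigl\{\|\tau^{(i)}-e_+^{(i)}\|,\|\tau^{(i)}-e_-^{(i)}\|\bigr\}<\varepsilon$. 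Using $\|\tau^{(i)}(x^{(i)})\|=1$ one expands
\[
\bigl\|\tau^{(i)}(x^{(i)})-e_\pm^{(i)}\bigr\|^{2}=2\bigl(1\mp\tau_1^{(i)}(x^{(i)})\bigr),
\]
so the closeness condition is equivalent to $|\tau_1^{(i)}(x^{(i)})|>1-\varepsilon^{2}/2$, which on squaring gives $\bigl(\tau_1^{(i)}(x^{(i)})\bigr)^{2}\geq 1-\varepsilon^{2}$, i.e. $1-\bigl(\tau_1^{(i)}(x^{(i)})\bigr)^{2}<\varepsilon^{2}$. This is exactly the defining inequality of $\mathcal{W}_i(\varepsilon)$; the alternative form written in the definition follows at once on substituting $\tau_1^{(i)}(x^{(i)})=\exp\bigl(\lambda_1^{(i)}\,T_i(x^{(i)})\bigr)\,x_1^{(i)}$.

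I do not expect any genuine obstacle: the statement is essentially an algebraic restatement of $\varepsilon$--closeness through the explicit form of $\tau^{(i)}$, and no use is made of \textbf{(H3)}--\textbf{(H4)} beyond the normalization that places $\tau^{(i)}$ on $\EU^{u_i-1}$. The only point deserving some care is the identification of $W^{uu}_{loc}(p_i)$ with the $x_1$--axis in these local coordinates, which is legitimate precisely because the non--resonance assumption \textbf{(H2)} allows a $C^{1}$--linearization at $p_i$ in the sense of Hartman, mapping each invariant manifold to the corresponding eigenspace.
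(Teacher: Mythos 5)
Your proposal is correct and follows essentially the same route as the paper: identify the projection of the local map onto $\EU^{u_i-1}\subset\Sigma_i^{out}$ with $\tau^{(i)}$, identify $W^{uu}_{loc}(p_i)\cap\Sigma_i^{out}$ with the points $e_\pm^{(i)}$, and convert $\varepsilon$-closeness into the inequality $1-\bigl(\tau_1^{(i)}(x^{(i)})\bigr)^2<\varepsilon^2$ defining $\mathcal{W}_i(\varepsilon)$. Your expansion $\bigl\|\tau^{(i)}-e_\pm^{(i)}\bigr\|^2=2\bigl(1\mp\tau_1^{(i)}\bigr)$ in fact yields that bound more cleanly than the paper's product $\bigl|\tau_1^{(i)}-1\bigr|\,\bigl|\tau_1^{(i)}+1\bigr|<\varepsilon^2$, but this is a cosmetic difference rather than a different argument.
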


\begin{proof}
The projection of the local map  into  $\EU^{s_i-1}\subset \Sigma_i^{out}$ depends on $\left(x_1^{(i)}, \ldots, x_{s_i}^{(i)}\right)$ and is given by $\tau^{(i)}\left(x^{(i)}\right)$. The points in $\Sigma_i^{in}$ that are mapped to within $\varepsilon$ of $W^{uu}_{loc}(p_i)$ corresponds to that:
$$
\left|\tau_1^{(i)}\left(x^{(i)}\right)-1\right|< \varepsilon \qquad \text{and} \qquad \left|\tau_1^{(i)}\left(x^{(i)}\right)+1\right|< \varepsilon,
$$
implying that $$\left|\tau_1^{(i)}\left(x^{(i)}\right)-1\right|\left|\tau_1^{(i)}\left(x^{(i)}\right)+1\right|= 1- \left(\tau_1^{(i)}\left(x^{(i)}\right)\right)^2<\varepsilon^2. 
$$
See Figure \ref{cusp1}. 
\end{proof}

Next result says that  $ \mathcal{W}_i(\varepsilon)$ has a cuspoidal form, implying that $\mathcal{W}_i(\varepsilon)^c$ has asymptotically full measure near the set of connections $\EU^{s_i-1}\subset \Sigma^{in}_i$. It may be seen as a consequence of Deng's Strong Lambda Lemma \cite{Deng1}.

\begin{proposition}
\label{medida da cusp}
For each $i\in\{1, \ldots, N\}$, given $\varepsilon, \delta>0$, we have:
$$
\frac{\ell (\mathcal{W}_i(\varepsilon)^c \cap E_i(\delta))}{\ell(E_i(\delta))} \leq \varepsilon^{-2\alpha_i}\delta^{\alpha_i-1}.$$ 
\end{proposition}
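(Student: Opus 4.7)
The plan is to reduce the wedge-complement condition to an explicit inequality in the expanding coordinates using Lemma~\ref{lemma4}, and then estimate the Lebesgue measure of the resulting cuspoidal set by a straightforward Fubini argument.

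First, I would translate the defining inequality of $\mathcal{W}_i(\varepsilon)^c$, namely $1-(\tau_1^{(i)}(x^{(i)}))^2 \ge \varepsilon^2$, via Lemma~\ref{lemma4} into a pointwise bound on $x^{(i)}$. Chaining the two inequalities yields
\begin{equation*}
\varepsilon^2 \le 1-(\tau_1^{(i)}(x^{(i)}))^2 < \bigl|x_1^{(i)}\bigr|^{-2\beta_i}\sum_{j=2}^{u_i}\bigl(x_j^{(i)}\bigr)^2,
\end{equation*}
so, since $\alpha_i = 1/\beta_i$, every point of $\mathcal{W}_i(\varepsilon)^c$ satisfies
\begin{equation*}
\bigl|x_1^{(i)}\bigr| < \varepsilon^{-\alpha_i}\Bigl(\sum_{j=2}^{u_i}\bigl(x_j^{(i)}\bigr)^2\Bigr)^{\alpha_i/2}.
\end{equation*}
Notice that the wedge condition is independent of $y^{(i)}\in \EU^{s_i-1}$, so the spherical factor enters the numerator and denominator identically and cancels.

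Second, I would intersect with $E_i(\delta)=\{\|x^{(i)}\|<\delta\}$. Here $\sum_{j\ge 2}(x_j^{(i)})^2 \le \|x^{(i)}\|^2 < \delta^2$, so the inequality above collapses to the much simpler containment
\begin{equation*}
\mathcal{W}_i(\varepsilon)^c \cap E_i(\delta) \subset \bigl\{|x_1^{(i)}|<\varepsilon^{-\alpha_i}\delta^{\alpha_i}\bigr\}\cap \bigl\{(x_2^{(i)},\ldots,x_{u_i}^{(i)})\in B_{u_i-1}(0,\delta)\bigr\}\times \EU^{s_i-1}.
\end{equation*}
This is precisely the cuspoidal shape suggested by Figure~\ref{cusp1}: along $x_1^{(i)}$ its width collapses as $\delta^{\alpha_i}$ while the transverse directions only shrink as $\delta$, and since $\alpha_i>1$ the ratio decays.

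Third, I would apply Fubini. Writing $\omega_k$ for the volume of the unit Euclidean ball in $\RR^k$ and $\sigma_{s_i-1}$ for the area of $\EU^{s_i-1}$, one gets
\begin{equation*}
\ell\bigl(\mathcal{W}_i(\varepsilon)^c\cap E_i(\delta)\bigr) \le 2\varepsilon^{-\alpha_i}\delta^{\alpha_i}\cdot\omega_{u_i-1}\delta^{u_i-1}\cdot \sigma_{s_i-1}, \qquad \ell(E_i(\delta))=\omega_{u_i}\delta^{u_i}\cdot \sigma_{s_i-1},
\end{equation*}
whence the ratio is at most $C\,\varepsilon^{-\alpha_i}\delta^{\alpha_i-1}$ with $C=2\omega_{u_i-1}/\omega_{u_i}$. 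Absorbing the dimensional constant into the factor $\varepsilon^{-\alpha_i}$ (using $\varepsilon<1$) upgrades the exponent to the stated $\varepsilon^{-2\alpha_i}$.

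There is no real obstacle beyond bookkeeping: the content is entirely packaged in Lemma~\ref{lemma4}, which already encodes the Strong Lambda Lemma behaviour that the expanding coordinate $x_1^{(i)}$ dominates after transit. The only mildly delicate point is ensuring that the width $\varepsilon^{-\alpha_i}\delta^{\alpha_i}$ is actually smaller than the ambient radius $\delta$, which holds as soon as $\delta<\varepsilon^{\alpha_i/(\alpha_i-1)}$ (else the bound is vacuous anyway), so $\alpha_i>1$ from \textbf{(H2)} drives the cusp to vanish relative to $E_i(\delta)$ as $\delta\to 0$.
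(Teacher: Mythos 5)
Your argument is correct and follows essentially the same route as the paper: apply Lemma~\ref{lemma4} to trap $\mathcal{W}_i(\varepsilon)^c\cap E_i(\delta)$ inside the slab $\bigl|x_1^{(i)}\bigr|<\varepsilon^{-\alpha_i}\delta^{\alpha_i}$ and then compare volumes via Fubini. You are in fact slightly more explicit than the paper about the dimensional constant and about why the weaker stated exponent $\varepsilon^{-2\alpha_i}$ absorbs it for small $\varepsilon$.
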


\begin{figure}\begin{center}
\includegraphics[height=6cm]{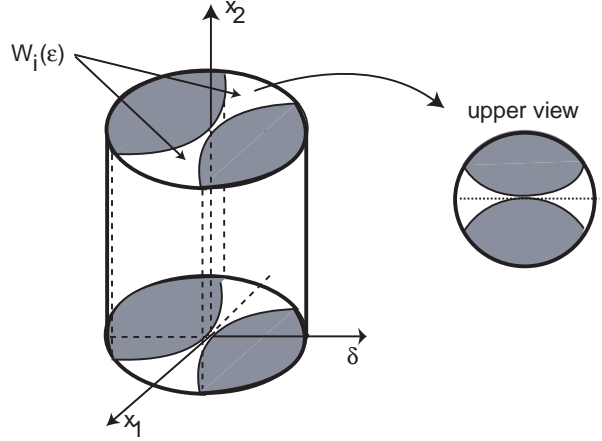}
\end{center}
\caption{\small Representation of the cusp $\mathcal{W}_i(\varepsilon)\subset \Sigma_i^{in}$, where $i\in \{1, \ldots, N\}$. The cuspoidal region corresponds to the white region. }
\label{cusp1}
\end{figure}

\begin{proof}
By Lemma \ref{lemma4}, it is easy to see that:
\begin{align*}
   \mathcal{W}_i(\varepsilon)^c \cap E(\delta)= 
    &\left\{\left(x^{(i)},y^{(i)}\right) \in \Sigma_i^{in} : \left\|x^{(i)}\right\|<\delta \quad \text{and} \quad 1- \left(\tau_1^{(i)}\left(x^{(i)}\right)\right)^2\geq \varepsilon^2\right\} \\    
   \subset & \left\{\left(x^{(i)},y^{(i)}\right) \in \Sigma_i^{in} : \left\|x^{(i)}\right\|<\delta \quad \text{and} \quad  \varepsilon^2\left(x_1^{(i)}\right)^{2\beta_i}\leq \sum_{j=2}^{n} \left(x_j^{(i)}\right)^2\right\} \\
  \subset & \left\{\left(x^{(i)},y^{(i)}\right)\in \Sigma_i^{in} : \left\|x^{(i)}\right\|<\delta \quad \text{and} \quad  \left(x_1^{(i)}\right)^{2}\leq \left(\frac{\delta^2}{\varepsilon^2}\right)^{\alpha_i}\right\} .
 \end{align*}
  The last inclusion follows because:
 $$
 \varepsilon^2\left(x_1^{(i)}\right)^{2\beta_i}\leq \sum_{j=2}^{n-s_i} \left(x_j^{(i)}\right)^2\leq \sum_{j=1}^{n-s_i} \left(x_j^{(i)}\right)^2=\left\|x^{(i)}\right\|^2<\delta^2.
 $$
 Since $\beta_i=(\alpha_i)^{-1}$, then:
 $$
 \varepsilon^2\left(x_1^{(i)}\right)^{2\beta_i}<\delta^2 \quad \Rightarrow\quad  \left(x_1^{(i)}\right)^{2}\leq \left(\frac{\delta^2}{\varepsilon^2}\right)^{\alpha_i}.
 $$ 
Integrating and using Fubini's Theorem, it follows immediately that: $$\frac{\ell (\mathcal{W}_i(\varepsilon)^c \cap E_i(\delta))}{\ell(E_i(\delta))} \leq {\varepsilon^{-2\alpha_i}\delta^{\alpha_i-1}}.$$
\end{proof}
By \textbf{(H4)}, for all $i\in\{1, \ldots, N\}$, we have $\alpha_i>1$. Hence:

\begin{corollary}
\label{medida da cusp}
For each $i\in\{1, \ldots, N\}$, if $\varepsilon, \delta>0$ are sufficiently small, the following equality holds:
$$
\lim_{\delta \rightarrow 0}\frac{\ell (\mathcal{W}_i(\varepsilon)^c \cap E_i(\delta))}{\ell(E_i(\delta))} =0.
$$
\end{corollary}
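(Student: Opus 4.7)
The plan is to deduce this corollary directly from the Proposition stated immediately above. That Proposition supplies the explicit quantitative bound
\[
\frac{\ell(\mathcal{W}_i(\varepsilon)^c \cap E_i(\delta))}{\ell(E_i(\delta))} \leq \varepsilon^{-2\alpha_i}\delta^{\alpha_i-1},
\]
valid for all sufficiently small $\varepsilon,\delta > 0$. Hence the only remaining task is to pass to the limit on the right-hand side as $\delta \to 0^+$, with $\varepsilon$ held fixed.

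The key observation is that the exponent $\alpha_i - 1$ is strictly positive. Indeed, Hypothesis \textbf{(H2)} requires the expanding eigenvalues at $p_i$ to be strictly ordered, $\lambda_1^{(i)} > \lambda_2^{(i)} > 0$, so from the definition in (\ref{constants}) one has
\[
\alpha_i = \frac{\lambda_1^{(i)}}{\lambda_2^{(i)}} > 1.
\]
With $\varepsilon$ fixed, the factor $\varepsilon^{-2\alpha_i}$ is a finite constant, so $\varepsilon^{-2\alpha_i}\delta^{\alpha_i-1} \to 0$ as $\delta \to 0^+$. Since the ratio on the left is manifestly non-negative, a standard sandwich argument delivers the claimed limit.

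There is no substantive obstacle to this step: the Proposition has already performed the geometric work of showing that $\mathcal{W}_i(\varepsilon)^c \cap E_i(\delta)$ is contained in a cuspoidal region whose Lebesgue measure shrinks polynomially faster (in $\delta$) than that of the ambient ball $E_i(\delta)$. The corollary merely repackages this quantitative estimate as the qualitative statement that $\mathcal{W}_i(\varepsilon)^c$ has asymptotically zero density inside $E_i(\delta)$ as $\delta \to 0$, a form that will be more convenient when iterating the local transition maps $f_i$ through the principal heteroclinic sequence and accumulating the resulting wedges to establish predominant asymptotic stability of $\Gamma^p$.
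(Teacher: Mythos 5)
Your proposal is correct and is essentially the paper's own argument: the paper deduces the corollary in one line from the preceding proposition by observing that $\alpha_i>1$, so $\varepsilon^{-2\alpha_i}\delta^{\alpha_i-1}\to 0$ as $\delta\to 0$ with $\varepsilon$ fixed. If anything, your attribution of $\alpha_i>1$ to the eigenvalue ordering in \textbf{(H2)} via the definition in (\ref{constants}) is more accurate than the paper's citation of \textbf{(H4)}, which concerns $\mu_i$ rather than $\alpha_i$.
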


\bigbreak
\subsection{Uniform convergence}
The next result says that the image of the wedge $\mathcal{W}_i(\varepsilon)$ under $f_i$ lies in a neighbourhood of $y^{(i+1)}_\pm$. 

\begin{lemma}
\label{Lemma10}
For each $i\in\{1, \ldots, N\}$, there is $\varepsilon_1^{(i)}>0$  such that for all $\varepsilon\in \left[0,\varepsilon_1^{(i)}\right]$, we have: $$f_i(\mathcal{W}_i(\varepsilon))\subset \mathcal{F}_{i+1}\left(\chi_i \varepsilon\right) \qquad \text{where} \qquad \chi_i=\max_{\phi^{(i)}\in\, \EU^{s_i-1}} \left\|\frac{d\gamma^{(i)}}{d\phi^{(i)}}\right\|.$$
\end{lemma}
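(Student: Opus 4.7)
\textbf{Plan for the proof of Lemma \ref{Lemma10}.}
The strategy is to translate the defining condition of $\mathcal{W}_i(\varepsilon)$, which constrains only the first coordinate $\tau_1^{(i)}$ of the angular projection $\tau^{(i)}(x^{(i)})\in\EU^{u_i-1}$, into a genuine proximity statement on $\EU^{u_i-1}$: namely, that $\tau^{(i)}(x^{(i)})$ lies within $\varepsilon$ of one of the two poles $e_\pm^{(i)}=(\pm 1,0,\ldots,0)$. Then I would transport that bound through the smooth map $\gamma^{(i)}$ into a bound in $\EU^{s_{i+1}-1}$ via its Lipschitz constant $\chi_i$, which is precisely what the definition of $\mathcal{F}_{i+1}$ requires.

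First, fix $(x^{(i)},y^{(i)})\in \mathcal{W}_i(\varepsilon)$. The spherical constraint $\sum_{j=1}^{u_i}(\tau_j^{(i)})^2=1$ combined with $1-(\tau_1^{(i)})^2<\varepsilon^2$ immediately gives $\sum_{j=2}^{u_i}(\tau_j^{(i)})^2<\varepsilon^2$ and $|\tau_1^{(i)}|>\sqrt{1-\varepsilon^2}$. For $\varepsilon<1$ the sign of $\tau_1^{(i)}$ is locally constant on connected components of $\mathcal{W}_i(\varepsilon)$, so I may split into two cases and assume WLOG $\tau_1^{(i)}>0$, whence $\tau_1^{(i)}\geq\sqrt{1-\varepsilon^2}$. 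The elementary identity
$$\left\|\tau^{(i)}(x^{(i)})-e_+^{(i)}\right\|^2 \;=\; (\tau_1^{(i)}-1)^2+\sum_{j=2}^{u_i}(\tau_j^{(i)})^2 \;=\; 2\bigl(1-\tau_1^{(i)}\bigr)$$
then yields $\|\tau^{(i)}(x^{(i)})-e_+^{(i)}\|\leq\sqrt{2(1-\sqrt{1-\varepsilon^2})}$, a bound which, by the expansion $1-\sqrt{1-\varepsilon^2}=\tfrac{1}{2}\varepsilon^2+O(\varepsilon^4)$, is at most $\varepsilon\,(1+\eta(\varepsilon))$ with $\eta(\varepsilon)\to 0$ as $\varepsilon\to 0$.

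Second, from the formula (\ref{linearization2}) for $f_i$, $y^{(i+1)}=\gamma^{(i)}(\tau^{(i)}(x^{(i)}))$ and by definition $y_+^{(i+1)}=\gamma^{(i)}(e_+^{(i)})$. Since $\gamma^{(i)}:\EU^{u_i-1}\to\EU^{s_{i+1}-1}$ is at least $C^1$, the mean value inequality applied along a path in $\EU^{u_i-1}$ joining $\tau^{(i)}(x^{(i)})$ to $e_+^{(i)}$ gives
$$\left\|y^{(i+1)}-y_+^{(i+1)}\right\| \;=\; \left\|\gamma^{(i)}(\tau^{(i)})-\gamma^{(i)}(e_+^{(i)})\right\| \;\leq\; \chi_i\,\bigl\|\tau^{(i)}-e_+^{(i)}\bigr\|.$$
Combining the two estimates gives $\|y^{(i+1)}-y_+^{(i+1)}\|\leq\chi_i\,\varepsilon\,(1+\eta(\varepsilon))$, and the symmetric argument with $e_-^{(i)}$ handles the case $\tau_1^{(i)}<0$. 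Hence $f_i(x^{(i)},y^{(i)})\in\mathcal{F}_{i+1}(\chi_i\varepsilon)$ as soon as $\varepsilon$ is small enough to make $1+\eta(\varepsilon)\leq 1$ after absorbing the factor into the choice of threshold; concretely, one picks $\varepsilon_1^{(i)}>0$ with $\sqrt{2(1-\sqrt{1-(\varepsilon_1^{(i)})^2})}\leq\varepsilon_1^{(i)}$, which is possible since the left-hand side is $\varepsilon_1^{(i)}+O((\varepsilon_1^{(i)})^3)$.

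\textbf{Main obstacle.} The only subtle point is obtaining \emph{exactly} the constant $\chi_i$, rather than $\chi_i$ times the harmless Euclidean-vs-chordal correction $1+\eta(\varepsilon)$. This is resolved either by restricting to $\varepsilon\leq\varepsilon_1^{(i)}$ with $\varepsilon_1^{(i)}$ chosen so small that the bound above is bona fide bounded by $\chi_i\,\varepsilon$, or equivalently by interpreting $\chi_i$ as the Lipschitz constant of $\gamma^{(i)}$ with respect to the geodesic (rather than chordal) metric on $\EU^{u_i-1}$, which differs from the Euclidean one by a factor $1+O(\varepsilon^2)$ near the poles. Either route produces the stated inclusion with the stated constant, and causes no disruption to the subsequent arguments.
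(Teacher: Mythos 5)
Your argument is essentially the paper's own proof: membership in $\mathcal{W}_i(\varepsilon)$ forces the angular image $\tau^{(i)}(x^{(i)})\in\EU^{u_i-1}$ to lie near one of the poles $e_\pm^{(i)}$, and the Lipschitz bound $\chi_i$ on $\gamma^{(i)}$ then places $y^{(i+1)}=\gamma^{(i)}(\tau^{(i)}(x^{(i)}))$ within the required distance of $y_\pm^{(i+1)}=\gamma^{(i)}(e_\pm^{(i)})$, exactly as in the paper (which simply invokes Lemma \ref{facil} for the $\varepsilon$-proximity and is in fact less careful than you about the chordal-versus-first-coordinate conversion). The only correction: $\sqrt{2(1-\sqrt{1-\varepsilon^2})}>\varepsilon$ for every $\varepsilon\in(0,1)$ (squaring reduces it to $\varepsilon^4>0$), so your ``concrete'' choice of $\varepsilon_1^{(i)}$ is vacuous and the harmless $1+O(\varepsilon^2)$ factor must instead be absorbed into the constant, as in the alternative route you already propose.
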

\bigbreak
\begin{proof}
We want to prove that if $\left(x^{(i+1)}, y^{(i+1)}\right) \in f_i(\mathcal{W}_i(\varepsilon))$, then $\left(x^{(i+1)}, y^{(i+1)}\right) \in \mathcal{F}_{i+1}(\chi_i\varepsilon)$. Indeed, if $\left(x^{(i+1)}, y^{(i+1)}\right) \in f_i(\mathcal{W}_i(\varepsilon))$, then there exists $\left(x^{(i)}, y^{(i)}\right) \in \mathcal{W}_i(\varepsilon)$ such that 
$$
f_i\left(x^{(i)}, y^{(i)}\right) =\left(x^{(i+1)}, y^{(i+1)}\right).
$$ 
Since $\left(x^{(i)}, y^{(i)}\right) \in \mathcal{W}_i(\varepsilon)$ then, by Lemma \ref{facil}, we have $\left\| y^{(i)}-e_\pm^{(i)}\right\|<\varepsilon$. 
Therefore, 
\begin{align*}
\left\|y^{(i+1)}-\gamma^{(i)}\left(e_\pm^{(i)}\right)\right\| & = \left\|\gamma^{(i)}\left(y^{(i)}\right)-\gamma^{(i)}\left(e_\pm^{(i)}\right)\right\| \\ & \leq  \max_{\phi^{(i)}\in \EU^{s_i-1}} \left\|\frac{d\gamma^{(i)}}{d\phi^{(i)}}\right\| \left\| y^{(i)}-e_\pm^{(i)}\right\| \\ & <\chi_i \varepsilon.
\end{align*}
\end{proof}

Since $\mu_i>1$ and $\rho_i$ is the midpoint of the interval $\left[1, \mu_i \right]\subset \RR^+$, it follows that $\mu_i>\rho_i>1$ (by \textbf{(H4)}).

\begin{lemma} 
\label{Lemma11}
For each $i\in\{1, \ldots, N\}$, there exists $\varepsilon_2^{(i)}>0$ such that for all $\varepsilon\in \left[0, \varepsilon_2^{(i)}\right]$, we have:
$$
f_i(E_i(\varepsilon))\subset E_{i+1}(\zeta_i\varepsilon^{\rho_i}) \qquad \text{where} \qquad \zeta_i=\max_{\phi^{(i)}\in \EU^{s_i-1}} \left\|M^{(i)}\left(\phi^{(i)}\right)\right\| .
$$
\end{lemma}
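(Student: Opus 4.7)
The plan is to extract, from the explicit formula for $f_i$ in (\ref{linearization2}), a bound on the first coordinate $\|x^{(i+1)}\|$ in terms of $\|x^{(i)}\|$ and the matrix norm $\zeta_i$, and then use Lemma \ref{lemma3} to convert the flight-time factor into a power of $\|x^{(i)}\|$. Notice that the other coordinate $y^{(i+1)}=\gamma^{(i)}(\tau^{(i)}(x^{(i)}))$ lies automatically in $\EU^{s_{i+1}-1}$, so membership in $\Sigma^{in}_{i+1}$ is free.

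First I would pick $(x^{(i)},y^{(i)})\in E_i(\varepsilon)$, so that $\|x^{(i)}\|<\varepsilon$ and $\|y^{(i)}\|=1$ in the max-norm, and write
$$
x^{(i+1)}=M^{(i)}\bigl(\tau^{(i)}(x^{(i)})\bigr)\cdot V,\qquad V_j=\exp\bigl(-\lambda_{u_i+j}^{(i)}T_i(x^{(i)})\bigr)y_j^{(i)}.
$$
Since $|y_j^{(i)}|\le 1$ and, by (\ref{order}), $\lambda_{u_i+j}^{(i)}\ge \lambda_{u_i+1}^{(i)}>0$, every entry satisfies $|V_j|\le \exp(-\lambda_{u_i+1}^{(i)}T_i(x^{(i)}))$, whence
$$
\|x^{(i+1)}\|\le \zeta_i\,\exp\bigl(-\lambda_{u_i+1}^{(i)}\,T_i(x^{(i)})\bigr).
$$

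Next, by Lemma \ref{lemma3}, $T_i(x^{(i)})>-\lambda_1^{(i)\,-1}\ln\|x^{(i)}\|$, so that (shrinking $\varepsilon$ below $1$ to guarantee $\ln\|x^{(i)}\|<0$)
$$
\exp\bigl(-\lambda_{u_i+1}^{(i)}T_i(x^{(i)})\bigr)<\|x^{(i)}\|^{\mu_i}<\varepsilon^{\mu_i}.
$$
Combining these two inequalities gives $\|x^{(i+1)}\|<\zeta_i\varepsilon^{\mu_i}$. Finally, by \textbf{(H4)} we have $\mu_i>\rho_i>1$, and since $\varepsilon<1$ the inequality $\varepsilon^{\mu_i}<\varepsilon^{\rho_i}$ is immediate; hence $\|x^{(i+1)}\|<\zeta_i\varepsilon^{\rho_i}$, i.e.\ $f_i(x^{(i)},y^{(i)})\in E_{i+1}(\zeta_i\varepsilon^{\rho_i})$.

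It therefore suffices to choose $\varepsilon_2^{(i)}\in(0,1)$ small enough that $E_i(\varepsilon_2^{(i)})$ lies inside the linearising neighbourhood $U_i$ and $\zeta_i(\varepsilon_2^{(i)})^{\rho_i}<1$ (so that the target ball $E_{i+1}(\zeta_i\varepsilon^{\rho_i})$ itself sits in $U_{i+1}$). There is no genuine obstacle here: the only delicate point is keeping track of which contracting eigenvalue dominates the norm of $V$, and the reason for stating the conclusion with the weaker exponent $\rho_i$ (rather than $\mu_i$) is to leave margin for the contraction factor $\zeta_i$ and for the uniform iteration arguments that will be carried out later in \S\ref{Proof of the Main Theorem}.
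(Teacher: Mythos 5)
Your proposal is correct and follows essentially the same route as the paper: bound the contracted coordinates by the slowest contraction rate $\lambda_{u_i+1}^{(i)}$, convert the flight time into the power $\|x^{(i)}\|^{\mu_i}$, and then weaken the exponent from $\mu_i$ to $\rho_i$ using $1<\rho_i<\mu_i$ and $\varepsilon<1$. The only cosmetic difference is that you invoke Lemma \ref{lemma3} (stated for $\|x^{(i)}\|$) where the paper uses inequality (\ref{important to lemma 4}) (stated for $|x_1^{(i)}|$); since $|x_1^{(i)}|\leq\|x^{(i)}\|$ these are interchangeable here.
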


\begin{proof}
We want to prove that if $\left(x^{(i+1)}, y^{(i+1)}\right) \in f_i(E_i(\varepsilon))$, then $\left(x^{(i+1)}, y^{(i+1)}\right) \in E_{i+1}(\varepsilon^{\rho_i})$. Indeed, if $\left(x^{(i+1)}, y^{(i+1)}\right) \in f_i(E_i(\varepsilon))$ then  there exists $\left(x^{(i)}, y^{(i)}\right) \in E_i(\varepsilon)$, \emph{i.e.} $\left\|x^{(i)}\right\|<\varepsilon$ such that:
$$
f_i\left(x^{(i)}, y^{(i)}\right) =\left(x^{(i+1)}, y^{(i+1)}\right).
$$ 
Indeed, by the expression (\ref{linearization2}), we get:
\begin{align*}
\left\|x^{(i+1)}\right\| &\leq  \max_{\phi^{(i)}\in \EU^{s_i-1}} \left\|M^{(i)}\left(\phi^{(i)}\right)\right\| \exp\left(-\lambda_{u_i+1}T_i\left(x^{(i)}\right)\right)\left\|y^{(i)}\right\|  \\ 
& \leq \max_{\phi^{(i)}\in \EU^{s_i-1}} \left\|M^{(i)}\left(\phi^{(i)}\right)\right\| \exp\left({T_i\left(x^{(i)}\right)}\right)^{-\lambda_{u_i+1}^{(i)} } \\
&  \leq \max_{\phi^{(i)}\in \EU^{s_i-1}} \left\|M^{(i)}\left(\phi^{(i)}\right)\right\|\left(x_1^{(i)}\right)^{\mu_i} \\
&  \leq \zeta_i\left\|x^{(i)}\right\|^{\rho_i},  
\end{align*}
and the result follows.
\end{proof}

\begin{figure}\begin{center}
\includegraphics[height=6cm]{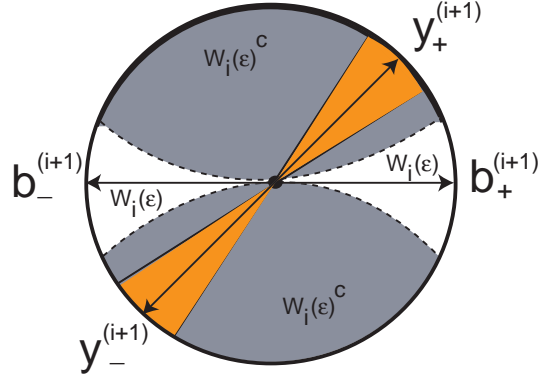}
\end{center}
\caption{\small Representation of the non-degeneracy condition (\ref{generic condition}): the global maps from $\Sigma_i^{out}$ to $\Sigma_{i+1}^{in}$ are non-degenerate and thus cusps are not mappped within other cusps. }
\label{non_nested_cusps}
\end{figure}

We wish to show that the image of most initial conditions passing through $\Sigma_i^{out}$ via $e_\pm^{(i)}$ hit $\Sigma_{i+1}^{in}$ with  nonzero component in the $x_{1,\pm}^{(i+1)}$ direction, where $x_{1,\pm}^{(i+1)}$ is the first component of 
\begin{equation}
\label{generic condition}
x_\pm^{(i+1)}=M^{(i)}\left(e_\pm^{(i)}\right)b_\pm^{(i)}.
\end{equation}
Generically, for each $i\in\{1, \ldots, N\}$, $x_{1,\pm}^{(i+1)\pmod{N^\star}} \neq 0$. This means that the global maps from $\Sigma_i^{out}$ to $\Sigma_{i+1}^{in}$ are generic and thus cusps are not mappped within other cusps.

\begin{lemma}
\label{Lemma12}
For each $i\in\{1, \ldots, N\}$, there exists $\varepsilon_3^{(i)}>0$ such that for all $\varepsilon\in \left[0, \varepsilon_3^{(i)}\right]$, we have:
$$f_i\left(\mathcal{W}_i(\varepsilon_3) \cap E_i(\varepsilon)\cap \mathcal{F}_i(\varepsilon_3)\right)\subset \mathcal{W}_{i+1}\left(\varepsilon^{2\rho_i(1-\beta_i)}\right).$$
\end{lemma}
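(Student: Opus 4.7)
The plan is to take any $(x^{(i)}, y^{(i)}) \in \mathcal{W}_i(\varepsilon_3) \cap E_i(\varepsilon) \cap \mathcal{F}_i(\varepsilon_3)$, form its image $(x^{(i+1)}, y^{(i+1)}) = f_i(x^{(i)}, y^{(i)})$, and verify the wedge condition at $p_{i+1}$. The strategy reduces to producing a constant $k>0$, independent of $\varepsilon$ (depending only on $\varepsilon_3$ and on the network geometry), such that
\begin{equation*}
\left(x_1^{(i+1)}\right)^2 \;\geq\; k \sum_{j=2}^{u_{i+1}} \left(x_j^{(i+1)}\right)^2,
\end{equation*}
and then combining this with the overall size bound $\|x^{(i+1)}\| \leq \zeta_i \varepsilon^{\rho_i}$ already delivered by Lemma \ref{Lemma11} via Lemma \ref{lemma com k} applied at the next equilibrium.

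To extract such a $k$ I would argue that the two conditions $(x^{(i)}, y^{(i)}) \in \mathcal{W}_i(\varepsilon_3)$ and $(x^{(i)}, y^{(i)}) \in \mathcal{F}_i(\varepsilon_3)$ together force $(x^{(i+1)}, y^{(i+1)})$ to sit close to the particular image singled out by the generic condition (\ref{generic condition}). Indeed, Lemma \ref{facil} places $\tau^{(i)}(x^{(i)})$ within $O(\varepsilon_3)$ of $e_\pm^{(i)}$, so $C^1$-smoothness of $M^{(i)}$ yields $M^{(i)}(\tau^{(i)}(x^{(i)})) = M^{(i)}(e_\pm^{(i)}) + O(\varepsilon_3)$; simultaneously, $y^{(i)} = y_\pm^{(i)} + O(\varepsilon_3)$ combined with the transit-time asymptotics of Lemma \ref{lemma3} forces the bracketed vector in (\ref{linearization2}) to align, up to relative error $O(\varepsilon_3)$, with the direction $b_\pm^{(i)}$. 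Consequently $x^{(i+1)}$ is a scalar multiple of $x_\pm^{(i+1)} = M^{(i)}(e_\pm^{(i)}) b_\pm^{(i)}$ up to relative error $O(\varepsilon_3)$, and the genericity hypothesis $x_{1,\pm}^{(i+1)} \neq 0$ supplies the desired uniform lower bound on the ratio $|x_1^{(i+1)}|^2 / \sum_{j \geq 2} (x_j^{(i+1)})^2$ once $\varepsilon_3^{(i)}$ is chosen sufficiently small.

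With such a $k$ in hand, Lemma \ref{lemma com k} applied at the equilibrium $p_{i+1}$ gives
\begin{equation*}
1 - \left(\tau_1^{(i+1)}(x^{(i+1)})\right)^2 \;\leq\; k^{-\beta_{i+1}} \|x^{(i+1)}\|^{2-2\beta_{i+1}} \;\leq\; k^{-\beta_{i+1}} \zeta_i^{\,2-2\beta_{i+1}}\, \varepsilon^{2\rho_i(1-\beta_{i+1})},
\end{equation*}
and a final shrinking of $\varepsilon_3^{(i)}$ converts this into the claimed wedge membership $\mathcal{W}_{i+1}(\varepsilon^{2\rho_i(1-\beta_i)})$. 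The hard step is the alignment argument: one must control uniformly in $x^{(i)}$ the interplay between the $O(\varepsilon_3)$ perturbations of $M^{(i)}$ and of $y^{(i)}$ on the one hand, and the exponential separation of scales $T_i \to +\infty$ on the other, and verify that these perturbations cannot destroy the nonzero first component $x_{1,\pm}^{(i+1)}$ supplied by (\ref{generic condition}). This is exactly the \emph{cusps are not mapped within other cusps} phenomenon depicted in Figure \ref{non_nested_cusps}, and it is precisely the point where the genericity assumption enters essentially.
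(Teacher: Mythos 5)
Your proposal follows essentially the same route as the paper's proof: both establish the hypothesis of Lemma \ref{lemma com k} for $x^{(i+1)}$ by using membership in $\mathcal{W}_i(\varepsilon_3)\cap\mathcal{F}_i(\varepsilon_3)$ to align the image with the generic direction $M^{(i)}\left(e_\pm^{(i)}\right)b_\pm^{(i)}$ of (\ref{generic condition}), and then combine the resulting estimate with the bound $\left\|x^{(i+1)}\right\|\leq\zeta_i\varepsilon^{\rho_i}$ from Lemma \ref{Lemma11}. If anything, you are more explicit than the paper about the $O(\varepsilon_3)$ perturbation argument behind the choice of the constant $k$, and you correctly use the index $\beta_{i+1}$ when applying Lemma \ref{lemma com k} at $p_{i+1}$, where the paper writes $\beta_i$.
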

\begin{proof}
First we may find $\varepsilon_3>0$ such that $\left(x^{(i)},y^{(i)}\right)\in E(\varepsilon_3)\cap \mathcal{F}(\varepsilon_3)$.
This means that there exists  $\left(x_1^{(i+1)}, \ldots, x_{u_i}^{(i+1)}\right)$ and $k\in \RR\backslash\{0\}$ such that:
$$
\left(x_1^{(i+1)}, \ldots, x_{u_i}^{(i+1)}\right)=
\left(\exp\left({-\lambda_{u_{i}+1}}^{(i)} T_i \left(x^{(i)}\right)\right)y_1^{(i)}, ..., \exp\left({-\lambda_{n}^{(i)} T_i\left(x^{(i)}\right)}\right)y_{s_i}^{(i)} \right)= k .b_\pm^{(i+1)}.
$$
Since $M^{(i)}\in GL(\RR^{u_{i+1}})$ and $\tau_i\left(x^{(i)}\right)$ is close to $e_\pm^{(i+1)}$, we may find $k_1>0$ such that the hypothesis of Lemma \ref{lemma com k} holds. Thus, for $\varepsilon\in \left[0, \varepsilon_3^{(i)}\right]$, it follows that:
$$
1-\left(\tau_1\left(x^{(i+1)}\right)\right)^2<k_1 ^{-\beta_i} \left\| x^{(i+1)}\right\|^{2-2\beta_i}< \varepsilon^{2\rho_i(1-\beta_i)}
$$
and we get the result.
\end{proof}

\subsection{The proof of Theorem \ref{Main}}
Let $\varepsilon_\star^{(i)}= \min\left\{\varepsilon_1^{(i)}, \varepsilon_2^{(i)}, \varepsilon_3^{(i)}\right\}>0$. For each $i\in\{1, \ldots, N^\star\}$,  using Lemmas \ref{Lemma10}, \ref{Lemma11} and \ref{Lemma12},
it follows that for all $\varepsilon \in \left[0, {\varepsilon_\star^{(i)}}\right]$, the inclusion holds, for $i \pmod{N^\star}$:
$$
f_i\left(\mathcal{W}_i\left(\varepsilon\right)\cap E_i\left(\varepsilon\right) \cap \mathcal{F}_i\left(\varepsilon\right)\right)\subset \mathcal{W}_{i+1}\left(\varepsilon^{{\rho_i}}\right)\cap E_{i+1}\left(\zeta_i\varepsilon^{{\rho_i}}\right)\cap \mathcal{F}_{i+1}(\chi_i\varepsilon^{\rho_i})
$$ 
For $n \in \NN$, define the sequence: $$\Omega_n= \mathcal{W}_{1}\left(\varepsilon^{\rho^{n-1}}\right)\cap E_{1}\left(\varepsilon^{{\rho^{n-1}}}\right)\cap \mathcal{F}_1\left( \varepsilon^{\rho^{n-1}}\right) \qquad \text{where}\quad \rho=\prod_{i=1}^{N^\star} \rho_i.
$$
By Lemma \ref{medida da cusp}, this set has asymptotically full Lesbegue measure. Therefore 
$
F(\Omega_n)\subset \Omega_{n+1},
$
implying that points in $\Omega_1$ converge uniformly to $\left(0, y_\pm^{(1)} \right)\in \Sigma^{in}_1$, under iteraction of $F$.  This finishes the proof of Theorem \ref{Main}.

\subsection*{Acknowledgments}
The author is very grateful to Valentin Afraimovich, who has sketched the proof of Theorem 2 during the Conference School  \emph{Dynamics, Bifurcations and Strange Attractors} which took place in Nizhny Novgorod - Russia.  Also special thanks to Alexander Lohse, Michael Field, Pedro Duarte, Jos\'e Pedro Gaiv\~ao and Telmo Peixe for the fruitful discussions.
\medbreak
{The author was partially supported by CMUP (UID/MAT/00144/2013), which is funded by FCT with national (MEC) and European structural funds through the programs FEDER, under the partnership agreement PT2020. AR also acknowledges financial support from Program INVESTIGADOR FCT (IF/00107/2015). Part of this work has been written during AR's stay in Nizhny Novgorod University, supported by the grant RNF 14-41-00044.}

\end{document}